\theoremstyle{plain}
\newtheorem{thm}{Theorem}[section]
\newtheorem{lem}[thm]{Lemma}
\newtheorem{prop}[thm]{Proposition}
\newtheorem{thm-defn}[thm]{Theorem-Definition}
\theoremstyle{definition}
\newtheorem{defn}[thm]{Definition}
\newtheorem{exmp}[thm]{Example}
\theoremstyle{remark}
\newtheorem{rem}[thm]{Remark}
\newcommand{\SC}{\mathcal D_{sg}}
\newcommand{\dissection}{(S,M,\mathcal O, G)}
\newcommand{\cB}{\mathcal{B}}
\newcommand{\cG}{\mathcal{G}}
\newcommand{\cH}{\mathcal{H}}
\newcommand{\gra}{G}
\newcommand{\orbi}{\mathcal{O}}
\newcommand{\yad}[1]{{\textcolor{violet}{#1}}}
\begin{document}

\title[Skew-gentle algebras and orbifolds]{Derived categories of skew-gentle algebras and orbifolds}

\author{Daniel Labardini-Fragoso}
\address{Daniel Labardini-Fragoso\newline
Instituto de Matem\'aticas, Universidad Nacional Aut{\'o}noma de M{\'e}xico, 
Ciudad Universitaria.
04510 Mexico City, M{\'e}xico}
\email{labardini@im.unam.mx}

\author{Sibylle Schroll}
\address{Sibylle Schroll\newline
Department of Mathematics, University of Cologne, 
Weyertal 86-90, 
50931 Cologne,
Germany}
\email{schroll@math.uni-koeln.de}

\author{Yadira Valdivieso}
\address{Yadira Valdivieso\newline
School of Mathematics and Actuarial Science, University of Leicester, 
University Rd, 
Leicester LE1 7RH,
UK}
\email{yvd1@leicester.ac.uk}

\normalem
\subjclass[2010]{Primary 18E30; 16E35; 16G20; 05E10 Secondary 16G10; 05E99}
\thanks{The first author was supported by a \emph{C\'{a}tedra Marcos Moshinsky} and the grants CONACyT-238754 and PAPIIT-IN112519. The second and the third author are supported by the Royal Society through the Newton International Fellowship NIF\textbackslash R1\textbackslash 180959. The second author is supported by the EPSRC  Early Career Fellowship EP/P016294/1. The third author is supported by the European Union's Horizon 2020 research and innovation programme under the Marie Sklodowska-Curie grant agreement No 838316}

\begin{abstract}
Skew-gentle algebras are a generalisation of the well-known class of gentle algebras with which they share many common properties.
In this work, using non-commutative Gr\"obner basis theory, we show that these algebras are strong Koszul and that the Koszul dual is again skew-gentle. 
We give a geometric model of their bounded derived categories in terms of polygonal dissections of surfaces with orbifold points, establishing a correspondence between curves in the orbifold and indecomposable objects. Moreover, we show that the orbifold dissections encode homological properties of skew-gentle algebras such as their singularity categories, their Gorenstein dimensions and derived invariants such as the determinant of their $q$-Cartan matrices. 
\end{abstract}

\maketitle


\section{Introduction}

Derived categories  play an important role in many branches of mathematics such as algebraic geometry and representation theory, where they  provide the proper setting for tilting theory \cite{BB80, Bo81,HR82}.

In general, giving a concrete description of the (bounded) derived category of a finite dimensional algebra is not easy to achieve. However, when the derived category is tame, this is often possible, and  a geometric realisation or a combinatorial description of their indecomposables objects and morphisms has been given for several families of well-known algebras, such as  hereditary algebras of Dynkin type or gentle algebras \cite{BM03, Hap87, HKK17,  LP19, OPS18}.

The derived categories of gentle algebras have been gaining relevance in several branches of mathematics; for example, recently these categories have been linked to homological mirror symmetry, a homological framework developed by Kontsevich \cite{Kon94} to explain the similarities between the symplectic geometry of the so-called $A$-model, and the algebraic geometry of the so-called $B$-model  of certain Calabi-Yau manifolds.
Derived categories of gentle algebras have provided a good understanding of the $A$-model in the mirror symmetry program in the case of surfaces. In particular, a connection between graded gentle algebras and Fukaya categories was established in \cite{HKK17, Boc16}, where collections of formal generators in (partially wrapped) Fukaya categories were constructed whose endomorphism algebras are graded gentle algebras. Conversely, in \cite{LP19, OPS18},  given a homologically smooth graded gentle algebra $A$, a graded surface with stops $(S_A, M_A, \eta_A)$ is constructed, where $S_A$ is an oriented smooth surface with non-empty boundary, $M_A$ is a set of stops on the boundary of $A$ and $\eta_A$ is a line field on $A$, such that the partially wrapped Fukaya category $\mathcal W(S_A, M_A, \eta_A)$ and derived category $\mathcal D(A)$ are equivalent.

 Skew-gentle algebras, skew group algebras of gentle algebras, have recently been related to other areas of mathematics where they have been a tool to prove some interesting results. For instance, in \cite{GLFS16}, triangulations giving rise to skew-gentle Jacobian algebras are used to establish the tameness of the Jacobian algebras associated  to triangulations of surfaces, see \cite{LF09,LF16}. These triangulations in \cite{GLFS16} were used in \cite{QZ17} to study the cluster category of punctured surfaces with non-empty boundary where it is shown that  there is  a bijection between so-called strings in a skew-gentle algebra and tagged curves in the corresponding surface.  In \cite{AP17}  a geometric construction of these  cluster categories via a $\mathbb{Z}/2$ actions on surfaces is given. In \cite{Val16}, skew-gentle algebras were used to prove that Jacobian algebras coming from triangulations of closed surfaces are algebras of exponential growth, an unexpected result uncovering a new class of symmetric tame $\Omega$-periodic algebras. Furthermore, in \cite{ping-zhou-zhu20},
 a geometric model for the module category of a skew-gentle algebra has been given in terms of tagged dissections of surfaces. 

The bounded derived categories of skew-gentle algebras have been studied by several authors beginning with  \cite{BMM03}. In that paper, the authors give a classification of the indecomposable objects in terms of so-called generalised homotopy strings and bands. In \cite{BD04},  another classification of the indecomposable objects is given by using different matrix reduction techniques. 
In \cite{AB19}, a geometric classification of the derived equivalence classes of  skew-gentle algebras is given based on the $\mathbb{Z}/2$-action using  the geometric model  and  results for gentle algebras  in \cite{OPS18, APS19}.

In the present paper, we realise the indecomposable objects of the bounded derived category of a skew-gentle algebra as curves on a surface with orbifold points of order two. Furthermore, using non-commutative Gr\"obner bases theory, we give  a direct proof that  skew-gentle algebras are strong Koszul, a property that is not known to be preserved under skew group action. We show that the Koszul dual is skew-gentle and that its geometric model has the same underlying surface. 

More specifically, we show that there is a bijection between skew-gentle algebras and generalised dissections of surfaces with orbifold points of order two, or simply \emph{orbifold dissection}. This bijection is a natural generalisation from gentle to skew-gentle algebras and it  has also recently been shown in \cite{AB19,  ping-zhou-zhu20}. However, in contrast to \cite{AB19}, where derived equivalences between two skew-gentle algebras are studied in terms of diffeomorphisms of the orbifold working mostly in  its double cover, we work directly in the orbifold  and prove that graded curves in an orbifold dissection coming from a skew-gentle algebra are in bijection with homotopy strings and bands, which by \cite{BMM03} describe the indecomposable objects in the bounded derived category of the algebra. Furthermore, the data of the orbifold dissection contains, on the one hand, the data of a line field in the same way as for gentle algebras in  \cite{AB19, APS19, LP19, OPS18}, and on the other, the homological grading in the derived category. In this paper we will focus on the latter.

Our results suggest that the bounded derived category of a skew-gentle algebra should in fact be a partially wrapped Fukaya category. However, in order to establish this, a complete description of the morphisms in the bounded derived category of a skew-gentle algebra is needed. Unlike for gentle algebras, in the skew-gentle case this is an open problem which we are hoping to address in a forthcoming paper \cite{SV}. 

 We now state our first result. According to the classification in \cite{BMM03}, indecomposable objects in the bounded derived category of a skew-gentle algebra fall within two classes, the so-called string objects and  band objects, the latter coming in infinite families. Using this fact, we show the following.


\bigskip
\noindent\textbf{Theorem A} (Theorem~\ref{thm:indecomposable objects}) {\it \, Let $A$ be a skew-gentle algebra with associated surface  $O$ with orbifold points and induced orbifold dissection $\gra$. Denote by $M$ the set of vertices of $\gra$. Then the data of $(O, M, \gra)$ gives a geometric model for the objects of the bounded derived category $D^b(A)$ of $A$. More precisely, 
\begin{enumerate}
    \item the indecomposable string objects in $D^b(A)$ are induced by graded arcs $(\gamma, f)$, where $\gamma$ is an orbifold homotopy class of curves in $O$ that either start and end in marked points in $M$, or start in a marked point in $M$ and wrap around a puncture at the other end, or wrap around a puncture at each end, and where $f$ is a grading on $\gamma$;
    \item each family of indecomposable band objects in $D^b(A)$ corresponds to
    a graded closed curve $(\gamma, f)$ where $\gamma$ is  an orbifold homotopy class of closed curves in $O$ with grading $f$  such that the  combinatorial winding number induced by $f$ is zero.
\end{enumerate}
}

We note that the combinatorial grading on curves naturally encodes the data of a line field on the orbifold surface. 

 Furthermore, we show  that the orbifold dissection corresponding to a skew-gentle algebra encodes important information of the algebra itself. Namely, as an application of our geometric model, we show how the orbifold dissection associated to a skew-gentle algebra $A$ encodes the singularity category of the $A$, its Gorenstein dimension and the  derived invariant given by the $q$-Cartan matrix of $A$. 

In \cite{Green2017} Green introduced the notion of a \emph{strong Koszul algebra}. By definition an algebra is strong Koszul if it has a quadratic Gr\"obner basis. It is shown in \cite{GH95} that strong Koszul algebras are Koszul. However, there are examples of Koszul algebras which are not strong Koszul. One such example is the family of Sklyanin algebras \cite{Smith94}. 

We prove  that skew-gentle algebras are strong Koszul and that their Koszul dual is again a (possibly infinite dimensional) skew-gentle algebra which can be realised on the same surface.  More precisely, we show the following.

\bigskip
\noindent\textbf{Theorem B} (Theorem~\ref{Thm:dual graph}) {\it
Let $A$ be a skew-gentle algebra.
Then $A$ is a strong Koszul algebra and  its Koszul dual $A^!$ is (locally) skew-gentle, and $A$ and $A^!$ give  rise to dual orbifold dissections on the same surface with orbifold points.}

{\bf Acknowledgements:}
The authors would like to thank Claire Amiot and Thomas Br\"ustle for discussing  and sharing the results of their recent paper \cite{AB19} while both their and our papers were still in preparation. The authors would also like to particularly thank Viktor Bekkert and Eduardo Marcos for helpful conversations in relation to \cite{BMM03}.

\section{Preliminaries}\label{sec:premilimaries}
In this section we fix  some of the notation and definitions which will be used through this paper. We fix an algebraically closed field $K$ of $\operatorname{char}\neq 2$.

\subsection{Gentle and skew-gentle algebras}

In this subsection, we define gentle and skew-gentle algebras.

A \emph{quiver} $Q$ is a quadruple $(Q_0, Q_1, s, t)$, where $Q_0$ is the set of vertices, $Q_1$ is the set of arrows and $s, t:Q_1\to Q_0$ are functions indicating the source and target of an arrow.  A \emph{path} $w$ of length $n>0$ in $Q$ is a sequence of arrows  $\alpha_1 \dots \alpha_n$ such that $t(\alpha_j)=s(\alpha_{j+1})$ for each $j=1, \dots, n-1$. For each vertex $i$, we denote by $e_i$ the trivial path of length $0$. 

The \emph{path algebra} $KQ$ is defined as the $K$-vector space with basis the set of all paths in $Q$, with multiplication induced by concatenation of paths.  A 2-sided ideal $I$ of $KQ$ is \emph{admissible} if there exists an integer $m\geq 2$ such that $R^m \subset I \subset R^{2}$, where $R$ is the ideal of $KQ$ generated by the arrows of $Q$.

Skew-gentle algebras were introduced in \cite{GdP99}. They are closely linked to the well-studied class of gentle algebras. For instance, they are skew-group algebras of gentle algebras. They are of tame representation type \cite{GdP99} and their derived categories are also tame \cite{BMM03, BD17}. In \cite{BMM03} a combinatorial description of the indecomposable objects in the bounded derived category of a skew-gentle algebra is given in terms of homotopy strings and bands.

In order to define skew-gentle algebras, we first recall the definition of gentle algebras.

\begin{defn}\label{defn:gentle}
A $K$-algebra $\Lambda$ is \emph{gentle} if it is Morita equivalent to $KQ/I$, where
\begin{enumerate}
\item\label{item:def-gentle-at-most-two} $Q$ is a finite quiver such that for every vertex $i$ of $Q$ there are at most two arrows ending at $i$ and at most two arrow starting at $i$;
\item\label{item:def-gentle-at-most-one-in-I} for every arrow $\alpha$ of $Q$, there is at most one arrow $\beta$ such that $t(\alpha) = s(\beta)$ and $\alpha\beta\in I$, and there is at most one arrow $\gamma$ such that  $t(\gamma) = s(\alpha)$ and $\gamma\alpha \in I$;
\item\label{item:def-gentle-at-most-one-outside-I} for every arrow $\alpha$ of $Q$, there is at most one arrow $\beta'$ such that $\alpha\beta'\notin I$, and there is at most one arrow $\gamma'$ such that $\gamma'\alpha\not\in I$;
\item\label{item:def-gentle-I-generated-by-paths-of-length-2} $I$ is the 2-sided ideal of $Q$ generated by certain paths of length 2;
\item\label{item:def-gentle-I-is-admissible} $I$ is an admissible ideal of $Q$.
\end{enumerate}

If $I$ satisfies \eqref{item:def-gentle-at-most-two}, \eqref{item:def-gentle-at-most-one-in-I}, \eqref{item:def-gentle-at-most-one-outside-I} and \eqref{item:def-gentle-I-generated-by-paths-of-length-2}, then we say that the quotient $KQ/I$ is \emph{locally gentle.}
\end{defn}

\begin{defn}\label{defn:skew-gentle}
A $K$-algebra $A$ is \emph{(locally)  skew-gentle} if it is Morita equivalent to an algebra $KQ/I$ where 
\begin{enumerate}
    \item $Q_1 = Q'_1 \cup S$, where for $\varepsilon \in S$, $s(\varepsilon) = t(\varepsilon)$,
    \item $I = \langle I' \cup \{ \varepsilon^2 - \varepsilon \mid \varepsilon \in S\} \rangle$,
    \item $KQ' / I'$ is a (locally) gentle algebra where $Q' = (Q'_1, Q_0)$,
    \item if $\varepsilon \in S$ then the  vertex $i = s(\varepsilon)$ is the start or the end of exactly one arrow in $Q'_1$ and if  there is an arrow $\alpha \in Q'_1$ with $t(\alpha) = i$ and an arrow $\beta \in Q'_1$ with $s(\beta) = i$ then $\alpha \beta \in I'$. Moreover, there is no other element in $S$ starting at i.
\end{enumerate}
\end{defn}

We call a vertex $i \in Q_0$ \emph{special} if there exists $\varepsilon \in S$ such that  $i = s(\varepsilon)$. We denote the set of special vertices by $Sp$. If $KQ/I$ is a skew-gentle algebra as above, we call $(Q', I', Sp)$ 
a \emph{skew-gentle triple}.

\begin{rem}
If $KQ/I$ is a skew-gentle algebra with non-empty set of special vertices, then the ideal $I$ is not admissible.
\end{rem}

An admissible presentation $KQ^{sg} /  I^{sg}$ of a skew-gentle algebra $A$ is given as follows. Let $\Lambda=KQ/I$ be the gentle algebra obtained from $A$ by deleting the special loops.

Set $$Q^{sg}_0(i) = \left\{ 
\begin{array}{ll}
\{i^+, i^-\} & \mbox{if $i \in Sp$} \\
\{ i \} & \mbox{otherwise} \\
\end{array}\right.
$$
Define 
$$Q^{sg}_0:= \bigcup_{i\in Q_0}Q^{sg}_0(i),$$
The arrows of $Q_1^{sg}$ are defined as follows. The set $Q^{sg}_1[i, j]$ of arrows from vertex $i$ to vertex $j$ is given by 
$$Q^{sg}_1[i,j]:=\{(i, \alpha,j) \mid \alpha\in Q_1, i \in Q_0^{sg}(s(\alpha)), j \in Q_0^{sg}(t(\alpha))\}.$$
The ideal $I^{sg}$ is defined as follows.

$$I^{sg}:=\Big\langle \sum_{j\in Q_0^{sg}(s(\beta))}\lambda_j(i,\alpha, j)(j,\beta, k)\mid \alpha\beta\in I, i\in Q_0^{sg}(s(\alpha)), k\in Q_0^{sg}(t(\beta)) \Big\rangle,$$
where $\lambda_j=-1$ if $j=l^-$ for some $l\in Q_0$, and $\lambda_j=1$ otherwise.

Note that in general, the relations in $I^{sg}$ are not monomial, instead, the ideal $I^{sg}$ is admissible and quadratic.

\begin{exmp}\label{ex:D_and_tildeD_are_skew-gentle}
Consider the following quivers 
$$
Q:\,  \xymatrix{4\ar[r]^{\alpha_1} & 3 \ar[r]^{\alpha_2} & 2 \ar[r]^{\alpha_3} & 1  \ar@(ur,dr)[]^{\varepsilon}}  \ \ \ \text{and} \ \ \ 
Q':\, \xymatrix{3 \ar[r]^{\alpha_1}\ar@(ul,ur)[]^{\varepsilon_1} & 2 \ar@(ul,ur)[]^{\varepsilon_2}\ar[r]^{\alpha_2} & 1,}
$$
and set $I=\{\varepsilon^2-\varepsilon\}$ and  $I'=\{\varepsilon_1^2-\varepsilon_1,\varepsilon_2^2-\varepsilon_2, \alpha_1\alpha_2\}$. Then the algebras $A_1=KQ/I$ and $A_2=KQ'/I'$ are skew-gentle and their respective admissible presentations $KQ^{sg}/I^{sg}$ and $KQ'^{sg}/I'^{sg}$ are as follows. 
$$
\xymatrix@C+1pc{ & & & 1^- \\
 Q^{sg}: \, 4 \ar[r]^{(4,\alpha_1, 3)} & 3 \ar[r]^{(3,\alpha_2, 2)} & 2 \ar[ur]^{(2, \alpha_3, 1^-)} \ar[dr]_{(2, \alpha_3, 1^+)} & \\
 & & & 1^+} \ \ \ \ \  
\xymatrix@C+1pc{3^- \ar[r]^{(3^-, \alpha_1, 2^-)} \ar[ddr]^(0.8){(3^-, \alpha_1, 2^+)} & 2^- \ar[dr]^{(2^-, \alpha_2, 1)} & \\
Q'^{sg}:\hspace{1.8cm} & & 1 \\
3^+ \ar[r]_{(3^+, \alpha_1, 2^+)} \ar[uur]^(0.3){(3^+, \alpha_1, 2^-)}  & 2^+ \ar[ur]_{(2^+, \alpha_2, 1)}& }
$$
where $I^{sg}$ is the empty set and $$I'^{sg}=\langle (3^{-}, \alpha_1, 2^+)(2^+, \alpha_2, 1)- (3^{-}, \alpha_1, 2^-)(2^-, \alpha_2, 1), \,  (3^{+}, \alpha_1, 2^+)(2^+, \alpha_2, 1)- (3^{+}, \alpha_1, 2^-)(2^-, \alpha_2, 1) \rangle.$$ 

Note that the algebra $KQ^{sg}/I^{sg} = KQ^{sg}$ corresponds to an orientation of the Dynkin diagram $\mathbb{D}_5$.
\end{exmp}

\subsection{Ribbon graphs and ribbon surfaces of a gentle algebra}

 We now briefly recall the construction of the ribbon graph of a gentle algebra embedded in a surface with boundary  as introduced in  \cite{Sch18, OPS18} based on  \cite{Sch15}.

A \emph{graph} $\gra$ is a quadruple $\gra=(M, E, s, \iota)$, where $M$ is a finite set of vertices, $E$ a finite set of half-edges, $s:E \to M$ is a function sending each half edge to the vertex it is attached to, and $\iota:E \to E$ is a fixed point free involution sending each  each half-edge to the other half-edge it is glued to.

A \emph{ribbon graph} is a graph $\gra$ endowed with a cyclic permutation  of the half-edges at each vertex, given by a function $\sigma: E\to E$ whose orbits correspond to the sets $s^{-1}(m)$, for all $m\in M$. A \emph{marked ribbon graph} is a ribbon graph $\gra$ equipped with a map $p : M \to E$, that is at each vertex we chose exactly one half-edge. 

Let $\Lambda$ be a gentle algebra. Following \cite{Sch15, Sch18, OPS18} we construct a marked ribbon graph $\gra_\Lambda$ canonically embedded into an unique, up to isomorphism, compact oriented surface $S_\Lambda$  in such a way that the faces of $\gra_\Lambda$ are in bijection with the boundary components of $S_\Lambda$.   Furthermore, the information given by $m$  translates into a gluing of the vertices of $\gra_\Lambda$ onto the boundary components of $S_\Lambda$. More precisely,

\begin{defn}\label{defn:ribbon-surface}
The \emph{marked ribbon graph} $\gra_\Lambda$ of  a gentle algebra $\Lambda=KQ/I$  is defined as follows.
The set of vertices $M$ of $\gra_\Lambda$ are in bijection with the set consisting of 
\begin{itemize}
\item \emph{maximal paths} in $KQ/I$, that is, paths $w \in KQ$, with  $w\notin I$ such that for any arrow $\alpha \in Q_1$, $\alpha w \in I$ and $w\alpha \in I$;
\item {trivial paths} $e_i$ such that $i$ is either  the source or the target of only one arrow, or $i$ is the target of exactly one arrow $\alpha$ and the source of exactly one arrow $\beta$, and $\alpha\beta\notin I$;
\end{itemize}

 The edges of $\gra_\Lambda$ are in bijection with the vertices of $Q_0$: It follows from the definition
  of $M$ that any vertex $i \in Q_0$ is in exactly two elements of $M$, that is, there exist $w_1, w_2$ in 
  $M$ such that $w_k = p_k e_i q_k$ for $p_k, q_k$ possibly trivial paths in $Q$ and $ k = 1,2$.   Hence by construction
  every vertex in $Q_0$ corresponds to exactly two elements in $M$, thus defining an edge in $
  \gra_\Lambda$. Unless otherwise specified and if no confusion arises,  we will denote the edge in $\gra_\Lambda$ corresponding to the vertex $i \in Q_0$ 
  again by $i$. 

Note that the construction of $\gra_\Lambda$  naturally gives a linear order of the half-edges attached to every vertex $w \in M$: namely,  if $i_1, \ldots, i_n$
are the half-edges at $w$ then $w = e_{i_1} \alpha_1 e_{i_2} \alpha_2 \cdots \alpha_{n-1} e_{i_n}$, with $\alpha_j \in Q_1$ and $i_j \in Q_0$ and the induced linear order of the half-edges is given by $i_1 < i_2 < \cdots < i_n$. The cyclic order $\sigma$ of the half-edges   at $w$ is then given by the cyclic closure of this linear order. 

Furthermore, we define the marking map $m: M \to E$ by $m(w)=e_{e(w)}$ for $w \in M$.
\end{defn}

We recall from \cite{OPS18} the following definition. 

\begin{defn}
The \emph{ribbon surface} $S_\Lambda$ of a gentle algebra $\Lambda$ is a tuple $S_\Lambda=(S_\Lambda, M_\Lambda)$, where $S_\Lambda$ is a compact oriented surface and a finite set $M_\Lambda$ of marked points in the boundary of $S_\Lambda$ such that $\gra_\Lambda$ is canonically embedded into  $S_\Lambda$ with faces of  $\gra_\Lambda$ corresponding to boundary components in $S_\Lambda$ and where  $M_\Lambda$ corresponds to the vertices $(\gra_\Lambda)_0$ of $\gra_\Lambda$  such that for each vertex $v\in V$, the boundary component lies between $m(v)$ and $\sigma(m(v))$ in the orientation of the surface.  
\end{defn}

\begin{rem}\label{polygons-gentle}
 By \cite[Proposition 1.12]{OPS18}, the ribbon graph $\gra_\Lambda$ of gentle algebra $\Lambda$ divides $S_\Lambda$ into polygons  of the following type
    \begin{enumerate}
        \item polygons whose edges are edges of $\gra_\Lambda$ except for exactly one boundary edge, and whose interior contains no boundary component of $S_\Lambda$;
        \item polygons whose edges are edges of $\gra_\Lambda$ and whose interior contains exactly one boundary component of $S_\Lambda$ with no marked points.
    \end{enumerate}
\end{rem}

We refer to  $(S_\Lambda,M_\Lambda, \gra_{\Lambda})$ as the \emph{surface dissection of $S_\Lambda$} associated to the gentle algebra $\Lambda$. 
In the following we will replace any boundary component with no marked points, such as those in Remark \ref{polygons-gentle} (2), with punctures. Furthermore, by abuse of notation we will sometimes treat these punctures as marked points in the interior of the surface. 

\section{Skew-gentle algebras and orbifolds}\label{sec:geometricmodel}

In this section, we define a  graph associated to a skew-gentle algebra generalising the ribbon graph associated to a gentle algebra, and we will show that this graph also has a canonical embedding into a surface.

\begin{lem}\label{lem:special-edges}
Let  $A$ be a skew-gentle algebra, $\Lambda=KQ/I$ be the associated gentle algebra obtained from $A$ by deleting all special loops and $(S_\Lambda,M_\Lambda, \gra_{\Lambda})$ be its surface dissection. Then any special vertex $i\in Sp$ corresponds to an edge $g$ in $\gra_{\Lambda}$ of a digon with exactly one boundary edge and whose interior contains no boundary component of $S_\Lambda$.
\end{lem}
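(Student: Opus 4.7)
The plan is to analyse the local structure of the underlying gentle algebra $\Lambda=KQ/I$ at the special vertex $i$ and to exhibit the digon directly. By condition~(4) of Definition~\ref{defn:skew-gentle} applied to the skew-gentle triple $(Q',I',Sp)$, after the special loop at $i$ is removed the vertex $i$ sits in exactly one of three local configurations in $Q'$: (A) $i$ is the target of exactly one arrow $\alpha\in Q_1'$ and the source of exactly one arrow $\beta\in Q_1'$ with $\alpha\beta\in I'$; (B1) $i$ is the target of exactly one arrow and the source of none in $Q_1'$; or (B2) $i$ is the source of exactly one arrow and the target of none in $Q_1'$. I will treat the three cases in parallel.

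In each case I first identify the two vertices of $\gra_\Lambda$ joined by the edge indexed by $i$ and record where $i$ sits in their linear orders of half-edges according to Definition~\ref{defn:ribbon-surface}. In case~(A) the relation $\alpha\beta\in I'$ forces $\alpha$ to be the last arrow of a unique maximal path $w_1$ ending at $i$ and $\beta$ to be the first arrow of a unique maximal path $w_2$ starting at $i$, so the edge $i$ joins $w_1$ and $w_2$ with $m(w_1)=i$ and $\sigma(m(w_2))=i$. In cases~(B1) and~(B2) the hypothesis on $i$ is precisely what makes the trivial path $e_i$ a vertex of $\gra_\Lambda$; the edge $i$ then joins $e_i$ to the unique maximal path vertex $w$ through $i$, with $m(e_i)=i=\sigma(m(e_i))$, and with $m(w)=i$ in case~(B1) and $\sigma(m(w))=i$ in case~(B2). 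The crucial common feature is that in all three cases $i$ sits immediately next to the boundary gap at both endpoints $v$ of the edge, i.e.\ $i$ equals $m(v)$ or $\sigma(m(v))$ at each endpoint.

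To finish, I plan to use the face-tracing permutation $\phi=\sigma^{-1}\iota$ of the ribbon graph $\gra_\Lambda$. A direct computation shows that in each of the three cases $\phi$ takes the boundary corner at one endpoint of the edge $i$ to the boundary corner at the other endpoint: for instance in case~(A), $\phi(w_1,m(w_1))=\phi(w_1,i)=\sigma^{-1}(w_2,i)=(w_2,m(w_2))$ because $\sigma^{-1}(i)=m(w_2)$ at $w_2$, and the computations in (B1) and (B2) are entirely analogous. Since faces of $\gra_\Lambda$ correspond to boundary components of $S_\Lambda$, two consecutive boundary corners in the face cycle correspond to two consecutive marked points on the same boundary component, and the interior edges of $\gra_\Lambda$ traversed between them make up the non-boundary sides of the polygon of the dissection carrying that boundary segment. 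In our situation only the edge $i$ is traversed, so that polygon has exactly two sides, namely $i$ and a single boundary segment; by Remark~\ref{polygons-gentle} it is automatically of type~(1) and its interior contains no boundary component of $S_\Lambda$, as claimed. The main delicate point is the bookkeeping of cyclic versus linear orders of half-edges at each endpoint in the face-permutation computation, which in every case reduces to the observation that $i$ sits at an extreme position (first or last) in the linear order at each of its two endpoints.
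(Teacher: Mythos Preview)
Your argument is correct and hinges on the same observation as the paper's: because the half-edge $i$ sits at an extreme position (first or last) in the linear order at each of its two endpoints, on one side of the edge $i$ both adjacent corners are boundary corners. The paper phrases this geometrically---in the polygon $P'$ the edges preceding and following $i$ must be boundary edges, hence coincide by \cite[Proposition~1.12]{OPS18}---while you encode the same fact through the face permutation $\phi=\sigma^{-1}\iota$ and read off the digon directly; the two arguments are interchangeable.
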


\begin{proof}
Let $i$ be a special vertex of $A$ corresponding the edge $i$  in $\gra_\Lambda$ and let $P$ and $P'$ be the polygons sharing the edge $g$ . By Definition \ref{defn:skew-gentle}, the vertex $i$ is either the start or the end of exactly one arrow in $Q$ or there are exactly two arrows $\alpha, \beta$ in $Q$   such that $s(\beta)=t(\alpha)=i$ and $\alpha\beta\in I$.  

Suppose first that we are in the latter case, that is that there are exactly two arrows $\alpha$ and $\beta$  such that $s(\beta)=t(\alpha)=i$. Assume further that $s(\alpha) $ and $t(\beta)$ are edges in $P$. Then the edge $j$ of $P'$ preceding $i$ and the edge $k$ following $i$ in the orientation of  $S_\Lambda$ are boundary edges. By \cite[Proposition 1.12]{OPS18} every polygon in the surface dissection of $S_\Lambda$ has at most one boundary edge which implies that $j=k$ and $P'$ is a digon. 

In the case that there is exactly one arrow incident with $i$, the argument is similar. 
\end{proof}

We now give the definition of a generalised ribbon graph associated to a skew-gentle algebra based on the ribbon graph and surface of the underlying gentle algebra.

Let  $A$ be a skew-gentle, $\Lambda=KQ/I$ be the associated gentle algebra obtained from $A$ by deleting all special vertices, and let $(S_\Lambda, M_\Lambda, \gra_\Lambda)$ its surface dissection. For each edge $g$ of $\gra_{\Lambda}$ corresponding to a special vertex $i\in Sp$, let $P$ and $P'$ be the polygons sharing the edge $g$. Suppose further that  $P'$ is a digon with one boundary edge (which exists by the specialeness of $v$, see Lemma \ref{lem:special-edges}). We define the \emph{local replacement of $g$ in $\gra_\Lambda$} as the graph embedded graph $\gra'_{\Lambda}$ obtained by contracting the boundary segment of $P'$ and identifying the vertices $p_1$ and $p_2$ by  collapsing the interior of the polygon $P'$ so that $g$ is incident with $p_1 = p_2$. In the process we obtain a new marked point in the interior of $P$, we will denote this point by  $o$ and  depict by drawing a cross-shaped vertex in the surface. We will refer to these vertices as special vertices. We locally illustrate the local replacement and the resulting new vertex in  Figure~\ref{fig:local-replacement}.

 \begin{figure}[ht!]
        \centering
        \begin{tikzpicture}
        \filldraw (0,0) circle (1pt) node[above] {\tiny$p_1$};
        \filldraw (2,0) circle (1pt) node[above] {\tiny$p_2$};
        \filldraw (2.2,-1) circle (1pt);
        \filldraw (-0.4,-1) circle (1pt);
        \draw (0,0) ..controls (1,-0.5) ..(2,0);
        \draw[dashed] (-0.7,0.06) ..controls (1, -0.1).. (2.7,0.06);
        \draw (0,0) to[bend left] (-0.4,-1);
        \draw (-0.4,-1) to[bend left] (0.3,-1.7);
        \draw (2,0) to[bend right] (2.2, -1);
        \draw (2.2,-1) to[bend right] (1.5,-1.7);
        \node at (0.7,-1) {\tiny$P$};
        \node at (1,-0.55) {\tiny{$g$}};
        \draw (0,0) to[bend left] (-1,-0.3);
        \draw (2,0) to[bend right] (3,-0.3);
        \node at (-0.5,-0.5) {\tiny{$\dots$}};
        \node at (2.3,-0.5) {\tiny{$\dots$}};
        \node at (1,-1.7) {\tiny$\dots$};
        \node at (-1.5,0) {$\gra_\Lambda:$};
        \filldraw (7,0) circle (1pt) node[above] {\tiny$p_1=p_2$};
        \filldraw (6,-0.5) circle (1pt);
        \filldraw (8,-0.5) circle (1pt);
        \draw[dashed] (5.5,0.45) to [bend right] (8.5,0.45);
        \draw (7,0) to[bend left] (5.5,0.2);
        \draw (7,0) to[bend right] (8.5,0.2);
        \draw (7,0) to[bend left] (6,-0.5);
        \draw (7,0) to[bend right] (8,-0.5);
        \draw (6,-0.5) to[bend left] (6.3, -1.3);
        \draw (8,-0.5) to[bend right] (7.7,-1.3);
        \node at (7,-1.5) {$\dots$};
        \draw (7,0) to (7,-1);
        \node at (7,-1) {$\times$};
        \node at (7.1, -1.2) {\tiny$o$};
        \node at (6.5,-0.7) {\tiny$P$};
        \node at (7.1,-0.6) {\tiny$g$};
        \node at (6.2,-0.2) {\tiny$\vdots$};
        \node at (7.8,-0.2) {\tiny$\vdots$};
        \node at (4.5,0) {$\gra'_\Lambda:$};
        \end{tikzpicture}
\caption{A local replacement}
\label{fig:local-replacement}
\end{figure}
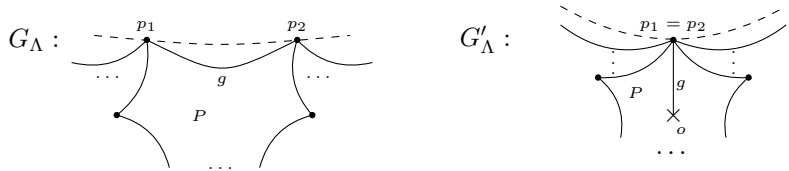

\begin{rem}\label{Rem:lamination}
(1) 
Observe that after local replacement the polygon $P$ is no longer a polygon, but corresponds to a degenerate or self-folded polygon with a special edge ending in a cross-shaped vertex.

(2) We note that up to homeomorphism the surface does not change under local replacement. However,  the number of marked points in the boundary changes.  
\end{rem}

\begin{defn}\label{generalised ribbon surface}
Let  $A$ be a skew-gentle algebra, $\Lambda=KQ/I$ be the associated gentle algebra obtained from $A$ by deleting all special vertices, and $(S_\Lambda, M_{\Lambda}, \gra_{\Lambda})$ be its surface dissection.

The \emph{generalised ribbon graph} $\gra_A$ of $A$ is the graph obtained from $\gra_\Lambda$ by applying a local replacement at each  each special vertex. 

Denote by $M_A$ the marked points on $S_\Lambda$ corresponding to the vertices of the embedded ribbon graph $G_A$ which are not special. Let $O_A=(S, M_A, \orbi)$ be the triple given by the surface $S = S_\Lambda$, the marked points $M_A$ and the set of special vertices $\orbi$. 
\end{defn}

From now on consider the special vertices of $\gra_A$ to be orbifold points of order two  and we say that an edge in $\gra_A$ joining a vertex and an orbifold point is a \emph{special edge}. Consequently, we refer to $O_A$ as the \emph{orbifold} of $A$.

Note that the above construction also works for locally skew-gentle algebras $A  = KQ/I$. In this case the generalised ribbon graph will have in addition to the special vertices punctures corresponding to either cycles with no relations in the quiver  or to cycles with full relations, but with a special loop at each vertex of the cycle.

\begin{rem}\label{rem:GeometricRibbonGraph}
We note that we can construct the generalised ribbon graph directly from the data of  $(Q, I , Sp)$,  where $A = KQ/I$ is a skew-gentle algebra with set of special vertices $Sp$.  We say that a path $p$ in $Q$ is $Sp$-\emph{maximal} if for all $x \in Q_1\setminus Sp$ we have $px = xp = 0$ in $KQ/I$. 
Then  the set of vertices $M $ of the ribbon graph $G_A$ of $A$ is in bijection with the union of all
\begin{itemize}
\item  $Sp$-maximal paths; 
\item trivial paths $e_i$ such that $i$ is either the source or the target of only one arrow, or $i$ is the target of exactly one arrow $\alpha$ and the source of exactly one arrow $\beta$, and $\alpha \beta \notin I$, or $i \in Sp$. 
\end{itemize}
The set of edges of $G_A$ is in bijection with the vertices of $Q_0$ (note that this includes the special vertices). Then $G_A$ is a ribbon graph with the cyclic ordering of the edges at each vertex induced by the $Sp$-maximal paths. Denote by $S_A$ the corresponding oriented surface with boundary such that $G_A$ is a deformation retract of $S_\Lambda$. Now define a marking 
map $m : M\setminus Sp \to E$ similar to the gentle case. Note that the elements of $M$ corresponding to the idempotents at special vertices in $Q_0$  are not marked. 
The marking map gives a unique way of gluing the marked vertices of $G_A$ to the boundary of $S_A$ whereas the vertices of $G_A$  corresponding to elements in $Sp$ stay in the interior of the surface where they give rise to the set $\mathcal O$ of orbifold points of order two. 
This gives rise to a generalised surface dissection which coincides with the construction of the orbifold dissection of $O_A$ in Definition~\ref{Def:orbifold-dissection} below. 
\end{rem}

\begin{exmp}\label{ex:D_and_tildeD_ribbonGraphsAndSurfaces}
Let $A_1=KQ/I$ and $A_2=KQ'/I'$ be the skew-gentle algebras from Example \ref{ex:D_and_tildeD_are_skew-gentle}. Note that the set of vertices ${M}_{A_1}$ of the generalised ribbon graph $G_{A_1}$ is the set $\{\alpha_1\alpha_2\alpha_3\varepsilon, e_4, e_3, e_2, e_1\}$ where $e_1$ corresponds to the trivial path associated to the special vertex $1$ and that the set of vertices $M_{A_2}$ of $A_2$ is the set $\{\varepsilon_1\alpha_1\varepsilon_2\alpha_2, e_3, e_2, e_1\}$ where $e_3$ and $e_2$ correspond to the trivial path associated to the special vertices $3$ and $2$ respectively. The generalised ribbon graphs $G_{A_1}$ and $G_{A_2}$ can be seen in Figure \ref{Fig:ribbon graph D_5 and A_2}.
\begin{figure}[ht!]
\centering
\begin{tikzpicture}
\node at (0,0) {$\alpha_1\alpha_2\alpha_3\varepsilon$};
\filldraw (0,-0.2) circle (1pt);
\filldraw (216:1.3cm) circle (1pt);
\filldraw (252:1.3cm) circle (1pt);
\filldraw (288:1.3cm) circle (1pt);
\node at (322:1.3cm) {\tiny$\times$};
\node at (216:1.5cm) {$e_4$};
\node at (252:1.5cm) {$e_3$};
\node at (288:1.5cm) {$e_2$};
\node at (322:1.6cm) {$e_1$};
\draw (0,-0.2) to (216:1.3cm);
\draw (0,-0.2) to (252:1.3cm);
\draw (0,-0.2) to (288:1.3cm);
\draw (0,-0.2) to (322:1.3cm);
\node at (4,0) {$\varepsilon_1\alpha_1\varepsilon_2\alpha_2$};
\filldraw (4,-0.2) circle (1pt);
\draw (225:1.3cm) ++(4cm,0pt)circle (0.00001cm) coordinate (a);
\draw (270:1.3cm)  ++(4cm,0pt)circle (0.00001cm) coordinate (b);
\filldraw (315:1.3cm)  ++(4cm,0pt)circle (1pt) coordinate (c);
\node at (b) {\tiny$\times$};
\node at (a) {\tiny$\times$};
\node[below] at (a) {$e_3$};
\node[below] at (b) {$e_2$};
\node[below] at (c) {$e_1$};
\draw (4,-0.2) to (a);
\draw (4,-0.2) to (b);
\draw (4,-0.2) to (c);
\end{tikzpicture}
    \caption{Generalised ribbon graphs of $\mathbb D_5$ and $A_2$ from Example \ref{ex:D_and_tildeD_are_skew-gentle}}
    \label{Fig:ribbon graph D_5 and A_2}
\end{figure}
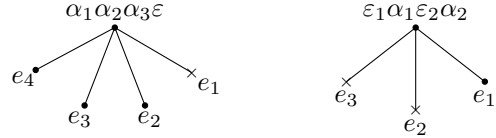

Then the generalised ribbon graphs $G_{A_1}$ and $G_{A_2}$ embedded in their respective orbifolds can be seen in Figure\ref{Fig:types_D5_and_tildeD6ribbonGraphsAndSurfaces}.

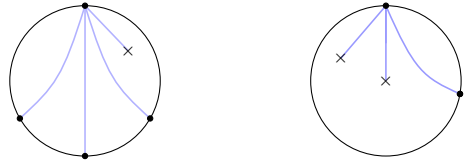
\begin{figure}[ht!]
\centering
\begin{tikzpicture}
\draw (0,0) circle (1cm);
\draw[semithick,blue!30!white] (90:1cm) to (270:1cm);
\draw[semithick,blue!30!white] (90:1cm)  ..controls (170:0.3cm) and (190:0.4cm) .. (210:1cm);
\draw[semithick,blue!30!white] (90:1cm) ..controls (10:0.3cm) and (350:0.4cm).. (330:1cm);
\draw[semithick,blue!30!white] (90:1cm) to (35:0.7cm);
\filldraw (90:1cm) circle (1pt);
\filldraw (270:1cm) circle (1pt);
\filldraw (210:1cm) circle (1pt);
\filldraw (330:1cm) circle (1pt);
\node at (35:0.7cm) {\tiny$\times$};
\draw (4,0) circle (1cm);
\filldraw (350:1cm) ++(4cm,0pt)circle (1pt) coordinate (c);
\draw[white] (170:0.3cm) ++(4cm,0pt) circle (0.4pt) coordinate (d);
\draw[white] (190:0.4cm) ++(4cm,0pt) circle (0.4pt) coordinate (e);
\draw[white] (20:0.4cm) ++(4cm,0pt) circle (0.4pt) coordinate (f);
\draw[white] (5:0.5cm) ++(4cm,0pt) circle (0.4pt) coordinate (g);
\draw[semithick,blue!40!white] (4,1) to (4,0);
\draw[semithick,blue!40!white] (4,1) to (4,0.5);
\draw[semithick,blue!40!white] (4,1) ..controls (f) and (g).. (c);
\draw[semithick,blue!40!white] (4,1) to (3.4,0.3);
\filldraw (350:1cm) ++(4cm,0pt)circle (1pt) coordinate (c);
\filldraw (4,1)circle (1pt) coordinate (c);
\node at  (4,0) {\tiny$\times$};
\node at  (3.4,0.3) {\tiny$\times$};
\end{tikzpicture}
    \caption{Generalised ribbon graphs of $\mathbb D_5$ and $KQ'/ I$ from Example \ref{ex:D_and_tildeD_are_skew-gentle} embedded in their respective orbifolds.}
    \label{Fig:types_D5_and_tildeD6ribbonGraphsAndSurfaces}
\end{figure}

\end{exmp}

For any skew-gentle algebra $A$, the edges of $\gra_A$ cut the orbifold  $O_A$ into polygons, some of which contain the points in $\orbi$ and a special edge connected to them. We call those polygons \emph{degenerate polygons}. We note that the following two results, Proposition~\ref{Prop:degenerate-polygons} and Theorem~\ref{Thm:skew-gentle orbifold}, have independently appeared  in \cite{AB19}.

\begin{prop}\label{Prop:degenerate-polygons}
Let $A$ be a skew-gentle algebra, and let $\gra_A$ be the generalised ribbon graph of $A$ embedded into its orbifold $O_A=(S_A, M_A, \orbi)$. Then $\gra_A$ cuts $O_A$ into four types of polygons:

\begin{enumerate}
    \item[a)]  polygons and degenerate polygons containing exactly one boundary segment whose interior contains no boundary component of $O_A$.
    \item[b)] polygons and degenerate polygons with no boundary segments  and whose interior contains exactly one boundary component of $O_A$ with no marked points.
\end{enumerate}
\end{prop}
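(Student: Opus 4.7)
The plan is to reduce the statement to the gentle case already recorded in Remark~\ref{polygons-gentle} and then to analyse how each local replacement modifies the polygonal decomposition. Let $\Lambda = KQ/I$ be the gentle algebra obtained from $A$ by deleting the special loops, and let $(S_\Lambda, M_\Lambda, \gra_\Lambda)$ be its surface dissection. By Remark~\ref{polygons-gentle}, $\gra_\Lambda$ cuts $S_\Lambda$ into polygons of two kinds: type~(1), with exactly one boundary segment and no interior boundary component, and type~(2), with no boundary segment but exactly one interior boundary component without marked points. I would take this as the starting point.

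Next, I would examine the effect of a single local replacement. By Lemma~\ref{lem:special-edges}, each edge $g$ of $\gra_\Lambda$ corresponding to a special vertex $i \in Sp$ is a non-boundary edge of a digon $P'$ of type~(1); call $P$ the polygon on the other side of $g$. The local replacement described before Definition~\ref{generalised ribbon surface} collapses $P'$ by identifying its two vertices $p_1$ and $p_2$ and contracting its unique boundary segment, so that $g$ becomes a special edge dangling from $p_1=p_2$ into a region $R$ obtained by gluing $P$ to (the remnant of) $P'$. The resulting region $R$ has the same boundary edges and the same interior boundary component (if any) as $P$, together with the new special edge ending at the orbifold point $o \in \orbi$; hence $R$ is a degenerate polygon of type~(a) when $P$ was of type~(1) and of type~(b) when $P$ was of type~(2). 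Polygons of $\gra_\Lambda$ not adjacent to any such $g$ are unaffected and remain of type~(1) or~(2), which are the non-degenerate cases of~(a) and~(b).

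Finally, I would argue that performing all local replacements simultaneously is well-defined and preserves the conclusion. By Definition~\ref{defn:skew-gentle}(4), each special vertex $i$ has its own special loop and at most one incident arrow (or precisely two with $\alpha\beta\in I'$); this means distinct special vertices give rise to distinct digons $P'$, and moreover the digon $P'$ attached to $g$ contains no other special edge. Consequently the local replacements at different special vertices act on disjoint digons and do not interfere with one another, even when several of them share the polygon $P$ on the non-digon side. The resulting decomposition of $O_A$ is therefore obtained by replacing each type~(1) digon $P'$ together with its neighbour $P$ by a single degenerate polygon of the corresponding type, which yields exactly the two classes~(a) and~(b) of the statement.

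The main obstacle is the bookkeeping in the previous paragraph: one must rule out pathological configurations in which two special edges interact with a common polygon in a way that would produce a region failing to be a polygon or degenerate polygon of the allowed form. This is handled precisely by the skew-gentle axiom guaranteeing the existence of a unique special loop per special vertex together with the zero relation $\alpha\beta \in I'$ at a special vertex, which forces the digon structure of Lemma~\ref{lem:special-edges} and makes the local replacements combinatorially independent.
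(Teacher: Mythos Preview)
Your proposal is correct and follows essentially the same approach as the paper: the paper's proof is a one-liner stating that the result ``directly follows from \cite[Proposition 1.12]{OPS18} and the construction of $\gra_A$ by local replacement,'' which is precisely your reduction to Remark~\ref{polygons-gentle} together with the analysis of how each local replacement merges a digon $P'$ with its neighbour $P$. You simply fill in the details the paper leaves implicit, including the independence of distinct local replacements.
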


\begin{proof} This directly follows from \cite[Proposition 1.12]{OPS18} and the construction of $\gra_A$ by local replacement. 
\end{proof}

\begin{defn}\label{Def:orbifold-dissection}
We call an \emph{orbifold dissection} any tuple of the form $\dissection$, where $S$ is a compact oriented surface with marked points $M$, orbifold points $\orbi$ of order 2 and $\gra$ is a graph as in Remark~\ref{rem:GeometricRibbonGraph} dissecting $S$ into polygons and degenerate polygons of the form as described in Proposition~\ref{Prop:degenerate-polygons}.
\end{defn} 

Before stating the next result, we define the following notation. Denote by $B$ one of the skew-gentle algebras with two vertices, one arrow between them and one or two special loops. 

\begin{thm}\label{Thm:skew-gentle orbifold}
Every skew-gentle algebra non-isomorphic to $B$  uniquely determines an orbifold dissection up to homemorphism and every orbifold dissection uniquely determines a skew-gentle algebra. 
\end{thm}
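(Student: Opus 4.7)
The plan is to produce the correspondence by constructing explicit maps in both directions and checking they are mutually inverse, modelling everything on the analogous bijection for gentle algebras and ribbon surface dissections from \cite{OPS18}, and then isolating the exceptional case $B$.

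First I would treat the forward direction, which is essentially already in place. Given a skew-gentle algebra $A$, Definition~\ref{generalised ribbon surface} together with Remark~\ref{rem:GeometricRibbonGraph} produces the data $(S,M,\mathcal{O},\gra_A)$, and Proposition~\ref{Prop:degenerate-polygons} ensures the polygonal subdivision has the required shape, so the output is an orbifold dissection. Uniqueness up to homeomorphism reduces to two pieces: the uniqueness (up to homeomorphism) of the ribbon surface of the underlying gentle algebra $\Lambda$, which is \cite[\S1]{OPS18}, and the fact that the local replacement in Figure~\ref{fig:local-replacement} is performed inside a digon determined canonically by a special vertex (Lemma~\ref{lem:special-edges}), hence is purely local and commutes with homeomorphisms.

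The substantive work is the reverse construction. Given $\dissection$ I would build a skew-gentle triple $(Q',I',Sp)$ by reading off the combinatorics of $\gra$ as in the gentle case. The set $Q_0$ is the set of edges of $\gra$; edges incident to an orbifold point yield elements of $Sp$ and carry a special loop $\varepsilon$. For each vertex $m\in M$ with cyclic order $i_1<\cdots<i_n$ on its incident half-edges dictated by the ribbon structure, insert arrows $\alpha_{k}\colon i_k\to i_{k+1}$ for consecutive pairs. Take $I'$ to be generated by the paths $\alpha_{k-1}\alpha_{k}$ whenever the pair crosses the boundary of a polygon from Proposition~\ref{Prop:degenerate-polygons}(a) rather than continuing around the same polygon; add $\varepsilon^2-\varepsilon$ for each special loop. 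One then verifies the four axioms of Definition~\ref{defn:skew-gentle}: the at-most-two conditions are immediate from $\gra$ being an embedded graph with vertices of finite valency equipped with a ribbon structure; the gentleness of $KQ'/I'$ (after deleting the special loops) follows because deleting the orbifold-data from the dissection yields a genuine surface dissection of a gentle algebra by the OPS bijection; the special-loop condition of Definition~\ref{defn:skew-gentle}(4) is forced by the fact that a special edge lies in a digon with exactly one boundary segment, which is exactly the configuration that arises from the OPS ribbon graph by a local replacement. Punctures, i.e.\ polygons of type (b), contribute the locally (skew-)gentle pieces exactly as in the gentle case.

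Next I would check that the two constructions are mutually inverse. In one direction, starting with $A$, deleting the special loops gives $\Lambda$, building the OPS dissection and then reinstating each special vertex as a digon with a cross recovers $(S,M,\mathcal{O},\gra_A)$ by construction; reading off the algebra recovers $A$ because the $Sp$-maximal paths and angles at non-special vertices literally match Remark~\ref{rem:GeometricRibbonGraph}. In the other direction, starting from a dissection, building the algebra, and applying Definition~\ref{generalised ribbon surface}, the gentle part produces back the dissection of the gentle sub-algebra by the OPS uniqueness, and the local replacement at the special edges reconstructs the original digon configuration by Lemma~\ref{lem:special-edges}.

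Finally I would isolate the exceptional case. The only place where the back-and-forth can fail is when the dissection is so small that two non-isomorphic choices of skew-gentle presentation produce homeomorphic dissections; this happens precisely for $B$, where the underlying surface (a disk with two marked points and one or two orbifold points, containing a single non-special edge) admits a symmetry exchanging roles that cannot be read off of the unmarked dissection, but any other skew-gentle algebra has either enough non-special edges or enough boundary data to be rigid, which I would verify by a short case analysis. The main obstacle is this last rigidity verification for the reconstruction, i.e.\ checking that outside of $B$ the combinatorial data on the dissection is sufficient to recover the skew-gentle triple uniquely; the rest is a largely formal extension of the OPS bijection via the local replacement procedure.
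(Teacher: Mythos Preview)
Your overall strategy coincides with the paper's: invoke Proposition~\ref{Prop:degenerate-polygons} for the forward direction, and for the converse read a quiver off the dissection with $Q_0$ equal to the set of edges of $\gra$, an arrow $i\to j$ whenever $j$ is the immediate successor of $i$ among the half-edges at a common vertex, and a special loop at each edge ending in an orbifold point. The paper's proof is in fact terser than your sketch: it neither spells out the mutual-inverse verification nor discusses the excluded algebra $B$; it just checks that the algebra built from an arbitrary orbifold dissection satisfies Definition~\ref{defn:skew-gentle} and that its associated orbifold dissection is the one you started from. So in outline you are doing exactly what the paper does, plus a little more bookkeeping.

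One point to correct. Your rule for the ideal $I'$ reads ``$\alpha_{k-1}\alpha_k\in I'$ whenever the pair crosses the boundary of a polygon rather than continuing around the same polygon''. As written this is backwards relative to the paper (and to the gentle model it extends): the paper imposes $\alpha\beta\in I$ precisely when the three edges $i,j,k$ of $\gra$ corresponding to $s(\alpha),\,t(\alpha)=s(\beta),\,t(\beta)$ lie on the boundary of \emph{one and the same} (degenerate) polygon. Recall that the vertices of $\gra$ encode maximal paths, so compositions along the fan of half-edges at a single vertex are automatically nonzero; relations arise on the polygon side. With this corrected, your verification of the axioms in Definition~\ref{defn:skew-gentle} goes through exactly as you outline.

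Your discussion of $B$ goes beyond what the paper supplies, but has a small slip: in the two-special-loop case $B$ has no non-special edge at all, and the reason for exclusion is not that the dissection-to-algebra map becomes ambiguous (the theorem asserts that direction holds unrestrictedly) but that the algebra-to-dissection assignment is not injective there. Adjust your case analysis accordingly.
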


\begin{proof}
By Proposition~\ref{Prop:degenerate-polygons} it is enough to show that given a orbifold dissection $\dissection$ there exists a skew-gentle algebra $A$ having $(S,M,\orbi)$ as its orbifold and $\gra$ as its generalised ribbon graph. 
Given $(S,M, \orbi, \gra)$, define a 
quiver $Q$ as follows:
\begin{enumerate}
    \item the vertices of $Q$ are in bijection with the edges of $\gra$;
    \item if $i, j$ are two edges incident with the same vertex in $\gra$ then there is an arrow from $i$ to $j$ if $j$ is a direct successor of $i$ in the orientation of the surface, that is there is no other edge of $\gra$ between $i$ and $j$.
    Note that if $i$ is a special edge in a degenerate polygon, then $i$ is its own successor, therefore there is a loop incident to $i$.  Denote by $Sp$ the set of vertices of $Q$ which corresponds to the special edges of $\gra$ and by $S$ the set of special loops incident to a special vertex $i\in Sp$.
\end{enumerate}
Observe that by construction any vertex of $Q$ has at most two in going arrows and at most two outgoing arrows, because any edge of $\gra$ shares at most two (degenerate) polygons.
Let $I$ be the ideal of $KQ$ generated by the following relations: if $\alpha: i \to j$ and $\beta:j\to k$ are two consecutive arrows such that $i, j$ and $k$ correspond to edges of the same (degenerate) polygon and such that neither $\alpha$ nor $\beta$ correspond to a special loop, then $\alpha\beta$ is a relation. Consequently, for any arrow $\alpha$, there is at most one consecutive arrow $\beta$ such that $\alpha\beta\in I$ and at most one preceding arrow $\gamma$ such that $\gamma\alpha\in I$. Finally, for each loop $\varepsilon$, incident to a special vertex, $\varepsilon^2-\varepsilon\in I$.

We need to show that $A=KQ/I$ is a skew-gentle algebra. By construction, if $P$ is a degenerate polygon, and $v$ is a special edge of $P$, then the valency of $i$ is at most two, because any special edge belongs to exactly one degenerate polygon. Moreover, if the valency of $i$ is two, by the definition of the generators of $I$, the composition of the arrow ending at $i$ with the arrow starting at $i$ is a generator.

Finally it follows from the construction of $KQ/I$ that its orbifold dissection is $(S,M, \orbi, \gra)$.
\end{proof}

\section{The dual graph and the Koszul dual of a skew-gentle algebra}

In this subsection, we show that a skew-gentle algebra is strong Koszul and that its Koszul dual is again skew-gentle. Furthermore, given a skew-gentle algebra $A$ and the corresponding skew orbifold dissection, we construct a dual graph embedded in the orbifold and we show that this dual graph is the orbifold dissection of  the Koszul dual  of $A$.

According to \cite{Green2017}, an algebra $KQ/I$ is \emph{strong Koszul} if $I$ is quadratic and has a quadratic Gr\"obner basis. By \cite{GH95} any strong Koszul algebra is  a Koszul algebra, but the converse does not always hold, for example, Sklyanin algebras \cite{Smith94} are Koszul algebras but not strong Koszul.

Since gentle algebras are Koszul and since skew-group constructions preserve the Koszul property \cite{Lip15}, it is clear that skew-gentle algebras are Koszul. However, it is not known whether the skew-group algebra of a strong Koszul algebra is strong Koszul.  The aim of this section is prove that skew-gentle algebras are strong Koszul. As  as consequence we also give a new proof that skew-gentle algebras are Koszul.
Following \cite{Gre99} and \cite{GHS17}, we recall the basic definitions of  Gr\"obner bases. Recall that  $\succ$ is an \emph{admissible order} on $B$ if $\succ$ is a total order on $B$ such that every nonempty subset of $B$ has a minimal element, and $\succ$ is compatible with the multiplicative structure of $B$, namely the following conditions hold for any $p,q,x,w \in B$, see \cite[Section 2.2.1]{Gre99} for details.

\begin{enumerate}

\item if $p\succ q$ then $px\succ qx$ when $px\neq 0$ and $qx\neq 0$.

\item if $p\succ q$ then $wp\succ wq$ when $px\neq 0$ and $qx\neq 0$.

\item if $p=qx$, then $p\succ q$ and $p\succ x$.

\end{enumerate}

\begin{defn}
Let $Q$ be a quiver, $\mathcal B$ the basis of paths of $KQ$ and $\succ$ an admissible order on $\cB$.   For $x=\sum_{p\in \mathcal B}\lambda_p p$ with $\lambda_p\in K$ such that almost all $\lambda_p=0$ define the \emph{tip of $x$}  to be
$\operatorname{tip}(x)=p \textrm{ if } \lambda_p\neq 0 \textrm{ and } p \succ q \textrm{ for all } q \textrm{ with } \lambda_p\neq 0.$
Furthermore, if $X\subset KQ$ then we define
$\operatorname{tip}(X)=\{\operatorname{tip}(x)\mid x\in X\setminus\{0\}\}$.
\end{defn}

To simplify notation, in this section a vertex idempotent $e_i$ associated to a vertex $i$ will be denoted by $i$. An element $x\in KQ$ is \emph{uniform} if there are vertices $i, j\in Q_0$
such that $ixj = x$.

\begin{defn}
Let $KQ/I$ be an algebra and let $\succ$ be an admissible order on the basis of paths $\cB$ of $KQ$. 
We say that  $\cG \subset I$ is a \emph{Gr\"obner basis} for $I$ with respect  $\succ$ if $\cG$ is a set of
uniform elements in $I$ such that
$\langle \operatorname{tip}(I)\rangle = \langle \operatorname{tip}(\cG)\rangle$.
\end{defn}

The concepts of simple and complete reductions play an important role in Gr\"obner basis theory. For example, a useful characterisation of a Gr\"obner basis is that all its elements completely reduce to zero, see, for example,  \cite[Proposition 2.9]{GHS17}. Furthermore, complete reduction can be used to complete a subset $\cH$ of uniform elements of the ideal  $I$ to a Gr\"obner basis.  

\begin{defn}
Let $\mathcal H$ be a set of nonzero uniform elements in $KQ$ and $x =\sum_{p\in \mathcal B}\lambda_p p\neq 0$ be an element of $KQ$ with $\lambda_p \in K$.
\begin{itemize}
    \item \emph{A simple reduction of $x$ by $\mathcal H$ is defined as follows}: Suppose for some $p$ with $\lambda_p\neq 0$ there exists $h \in \mathcal H$  and $r,s  \in \cB$ such that $r \operatorname{tip}(h) s = p$. If $\lambda$ is the coefficient of $\operatorname{tip}(h)$  as a summand of  $h$  as a linear combination of basis elements  then
a simple reduction $x \to_{\mathcal H} y$ of $x$ by $\mathcal H$ is $y = \lambda x-\lambda_p rhs$. This replaces  $\lambda_p p$ in $x$ by a linear combination of paths smaller than $p$.

    \item \emph{A complete reduction $x {\implies}_{\mathcal H} y_n$ of $x$ by $\mathcal H$}
is a sequence of simple reductions $(\dots((x \to_{\mathcal H} y_1) \to_{\mathcal H} y_2) \to_{\mathcal H} \dots) \to_{\mathcal H} y_n$, such that
either $y_n = 0$ or $y_n$ has no simple reductions by H.
\end{itemize} 
\end{defn}

\begin{defn}
Let $x = \sum_{p\in \mathcal B}\lambda_pp$ and $y=\sum_{q\in\mathcal B}\mu_qq$ in $KQ$. Suppose that $s = \operatorname{tip}(x), t= \operatorname{tip}(y)$ and $sm = nt$
for some $m, n \in \mathcal B \setminus Q_0$ where the lengths of $m$ and $n$ are strictly less than the length of $s$. Then the overlap relation, $\mathbf {o}(x, y, m, n)$, is
$$\mathbf{o}(x, y, m, n) = (\mu_t)xm-(\lambda_s)ny.$$
\end{defn}

We now show that a skew-gentle algebra is strongly Koszul by showing that it has a quadratic Gr\"obner basis. For this we begin by showing that given a skew-gentle algebra $A$, any admissible order for the underlying gentle algebra $\Lambda$ induces  a natural admissible order for $A$.

\begin{lem}\label{Lemma:Admissible-order}
Let $A$ be a skew-gentle, $\Lambda=KQ/I$ be the gentle algebra obtaining from $A$ by deleting special loops, and let $\cB$ be the basis of paths of $KQ$.  Suppose that $\succ_Q$ is an admissible order on $\cB$. Then $\succ_Q$ induces an admissible order $\succ$ on the basis of paths $\cB^{sg}$ of $KQ^{sg}$.
\end{lem}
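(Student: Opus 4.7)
The natural forgetful map $\pi\colon \cB^{sg} \to \cB$ sending an arrow $(i,\alpha,j)\in Q_1^{sg}$ to $\alpha\in Q_1$ and the idempotents $e_{i^+}, e_{i^-}$ (for $i\in Sp$) to $e_i$ extends multiplicatively to a length-preserving surjection of path monoids. The plan is to pull back $\succ_Q$ along $\pi$ and break ties lexicographically on the $\pm$-decorations at special vertices. Concretely, I would fix once and for all an arbitrary total order $<_0$ on the finite set $Q_0^{sg}$, and for $p,q\in \cB^{sg}$ declare $p\succ q$ whenever either $\pi(p)\succ_Q \pi(q)$, or $\pi(p)=\pi(q)$ and the sequence of vertices traversed by $p$ is lexicographically larger than that of $q$ with respect to $<_0$.

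Totality is immediate from the construction, and well-foundedness follows because $\succ_Q$ well-orders $\cB$ while each fibre $\pi^{-1}(w)$ is finite (of cardinality at most $2^{|w|+1}$), so the lexicographic tiebreaker is itself a well-order on each fibre. The three multiplicative admissibility axioms would then be verified by a two-case split. When $\pi(p)\succ_Q \pi(q)$, all the axioms follow at once from their analogues for $\succ_Q$, using that $\pi$ preserves both concatenation and length. When $\pi(p)=\pi(q)$ and the tiebreaker is in force, the crucial observation is that if $px$ and $qx$ are both nonzero then their vertex sequences are obtained by concatenating those of $p$ and $q$ with that of $x$ (identifying the shared join vertex), so the first index at which $p$ and $q$ disagree coincides with the first index at which $px$ and $qx$ disagree; hence the lexicographic outcome is preserved. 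The symmetric argument takes care of left multiplication.

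For the sub-path axiom, if $p=qx$ with $q$ and $x$ both non-idempotent then $\pi(q)$ and $\pi(x)$ are nontrivial proper factors of $\pi(p)$, so $\pi(p)\succ_Q \pi(q)$ and $\pi(p)\succ_Q \pi(x)$ by the admissibility of $\succ_Q$, and therefore $p\succ q$ and $p\succ x$ without ever invoking the tiebreaker. The only real subtlety is the multiplication-compatibility of the tiebreaker on a single fibre, and it is precisely for this reason that I would use a lexicographic order on vertex sequences rather than an ad-hoc choice: since vertex sequences concatenate under path multiplication, the position of the first mismatch is stable under both left and right multiplication, so no additional combinatorial input is required beyond the choice of $<_0$.
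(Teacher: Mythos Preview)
Your argument is correct and follows the same strategy as the paper: pull back $\succ_Q$ along the forgetful map $\pi\colon\cB^{sg}\to\cB$, break ties on the $\pm$-decorations, and deduce well-foundedness from the finiteness of the fibres of $\pi$; your treatment of the multiplicative axioms is in fact more explicit than the paper's, which simply asserts them ``by construction'' and checks only the descending-chain condition. One point worth noting is that the paper does not use an arbitrary $<_0$ but rather the specific tiebreaker $(i^+,\alpha,j^+)\succ(i^-,\alpha,j^-)\succ(i^+,\alpha,j^-)\succ(i^-,\alpha,j^+)$ on arrows, chosen so that in the next proposition the tip of each binomial generator of $I^{sg}$ is the ``diagonal'' term $(i,\alpha,j^{\pm})(j^{\pm},\beta,k)$; your lexicographic tiebreaker proves the lemma just as well, but you would need to specialise $<_0$ before feeding the resulting order into the Gr\"obner-basis computation.
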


\begin{proof}
Let $\succ_{Q}$ be an  order on $Q$ inducing an admissible order on $\cB$. Consider $\succ$ an order on $Q^{sg}$ induced by $\succ_{Q}$ as follows.

Let $(i, \alpha, j)$ and $(i', \beta, j')$ be two arrows in $Q^{sg}$. If $\alpha\neq \beta$, we say that  $(i, \alpha, j)\succ (i', \beta, j')$ if and only if $\alpha \succ_Q \beta$.

Now, suppose that $\alpha=\beta$ and $s(\alpha)$ or $t(\alpha)$ is a special vertex, then we fix an order on the set of arrows induced by $\alpha:i \to j$ as follows.

If $i$ and $j$ are special vertices, then $(i^+, \alpha, j^+) \succ (i^-,\alpha, j^-) \succ (i^+, \alpha, j^-) \succ (i^-, \alpha, j^+)$. If $i$ is a special vertex and $t(\alpha)$ is not special (resp. $i$ is not special and $j$ is special), then $(i^+, \alpha, j) \succ (i^-,\alpha, j)$ (resp. $(i, \alpha, j^+) \succ (i,\alpha, j^-)$).

By construction the only property we need to check in order for $\succ$ to be admissible, is that every descending chain in $\cB^{sg}$ has a minimal element.  Let C=($p_1\succ p_2\succ \dots \succ p_t \succ \dots)$ be a descending chain  of paths in $\cB^{sg}$. By construction of $kQ^{sg}$, each path $p_r$ is a sequence of arrows of the form $(i_{r_1}, \alpha_{r_1},j_{r_1})\dots (i_{r_l}, \alpha_{r_l},j_{r_l})$ such that  $\alpha_{r_1}\dots \alpha_{r_l}$ is a path $\widehat{p_r}$ in $Q$ and  $C'=(\widehat{p_1}\succ_Q \widehat{p_2}\succ_Q \dots \succ_Q \widehat{p_t} \succ_Q \dots)$ is a descending chain of elements in $\cB$. Since $\succ_Q$ is an admissible order on $\cB$, the chain $C'$ has a minimal element $\widehat{p}_r$, for some $r\in\mathbb N$, such that  $\widehat{p}_r=\widehat{p}_{r+m}$ for all $m\in\mathbb N$, therefore there exists some $r' \in \mathbb N$ such that $p_{r'}=p_{r'+m}$ for all $m\in\mathbb N$.

Thus the order $\succ$ on the basis of paths $\cB^{sg}$ of $KQ^{sg}$ is admissible.
\end{proof}

\begin{prop}
Let $A$ be a locally skew-gentle algebra, then $A$ is strong Koszul.
\end{prop}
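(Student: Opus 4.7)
The plan is to exhibit a quadratic Gr\"obner basis of $I^{sg}$ with respect to the admissible order $\succ$ produced in Lemma~\ref{Lemma:Admissible-order}, which by definition will show that $A$ is strong Koszul. We take as candidate the natural generating set $\cG$ of $I^{sg}$: for each monomial relation $\alpha\beta\in I$ of the underlying gentle algebra $\Lambda = KQ/I$ and each choice of $i\in Q_0^{sg}(s(\alpha))$ and $k\in Q_0^{sg}(t(\beta))$, include the uniform quadratic element
\[
g_{i,\alpha,\beta,k} \;=\; \sum_{j\in Q_0^{sg}(t(\alpha))}\lambda_j\,(i,\alpha,j)(j,\beta,k),
\]
which is either a monomial (when $t(\alpha)$ is not special) or a signed binomial. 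Inspecting the order from the proof of Lemma~\ref{Lemma:Admissible-order}, one checks that the tip of $g_{i,\alpha,\beta,k}$ is always the length-two monomial obtained by choosing the $+$-version at each special endpoint or middle vertex; call it $\tau_{i,\alpha,\beta,k}$. Thus every tip in $\cG$ is a length-two path.

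By Buchberger's criterion (e.g.\ \cite[Proposition 2.9]{GHS17}), $\cG$ will be a Gr\"obner basis provided that every overlap relation between two of its elements completely reduces to zero via simple reductions by $\cG$. Since tips have length two, overlaps correspond to length-three words $\tau_{i,\alpha,\beta,k}\cdot m = n\cdot \tau_{j',\beta,\gamma,l}$ arising from a triple $\alpha,\beta,\gamma$ of arrows in $Q$ with both $\alpha\beta\in I$ and $\beta\gamma\in I$. When neither $t(\alpha)$ nor $t(\beta)$ is special, both generators are monomials and the overlap $gm-ng'$ vanishes identically, recovering the standard Gr\"obner basis argument for ordinary gentle algebras.

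The main technical content, and the principal obstacle, lies in the cases where at least one of $t(\alpha)$, $t(\beta)$ is special. Then $gm-ng'$ is a signed sum of one or two length-three words, and the task is to exhibit a short sequence of simple reductions using the \emph{sibling} generators of $\cG$ --- those built from the same relation $\alpha\beta$ or $\beta\gamma$ but with the opposite sign choice at the relevant outer endpoint --- that annihilates them. The analysis splits into finitely many configurations according to the specialness pattern of the vertices $s(\alpha), t(\alpha), t(\beta), t(\gamma)$; in each configuration a short explicit computation, exploiting the sign convention $\lambda_{j^+}=1$, $\lambda_{j^-}=-1$, reduces the overlap to zero in at most two simple reduction steps. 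This shows that $\cG$ is a quadratic Gr\"obner basis of $I^{sg}$, so $A$ is strong Koszul. The argument is purely local at each vertex, so the locally skew-gentle case is handled verbatim.
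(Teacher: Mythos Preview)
Your approach is essentially identical to the paper's own proof: you use the same admissible order from Lemma~\ref{Lemma:Admissible-order}, the same natural quadratic generating set $\cG$ for $I^{sg}$, and verify Buchberger's criterion by the same case analysis on overlaps, with the paper carrying out explicitly the two-step simple reduction you sketch in the binomial case. One small caveat: your claim that the tip always picks the $+$-version at the special middle vertex is not quite right --- when the fixed outer endpoint is $i=s(\alpha)^-$ the order from Lemma~\ref{Lemma:Admissible-order} forces the tip to use $j^-$ --- but this imprecision does not affect the overlap-reduction argument, which goes through verbatim in either case.
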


\begin{proof}
	Let $A$ be a locally skew-gentle algebra, $\Lambda=KQ/I$ be the locally gentle algebra obtained from $A$ by deleting special loops. Consider the admissible presentation of $A$, namely $A^{sg}=KQ^{sg}/I^{sg}$. 
By \cite[Theorem 3]{GH95} it is enough to prove that there exists a quadratic Gr\"obner basis for the ideal $I^{sg}$. Let $\succ_Q$ be an admissible order on $\cB$ the basis of path of $KQ$, for example, and more precisely let $\succ_Q$ be a paths length lexicographical order. By Lemma \ref{Lemma:Admissible-order}, there exists an admissible order $\succ$ on $\cB^{sg}$, the basis of paths of $KQ^{sg}$ such that:
\begin{enumerate}
    \item if $\alpha\neq \beta$ then $(i, \alpha, j)\succ (i', \beta, j')$ if and only if $\alpha \succ_Q \beta$;
    \item the order on the set of of arrows of $Q^{sg}$ associated to an arrow $\alpha$ in $Q$ ending or starting in a special vertex is given as follows: If $i$ and $j$ are special vertices, then $(i^+, \alpha, j^+) \succ (i^-,\alpha, j^-) \succ (i^+, \alpha, j^-) \succ (i^-, \alpha, j^+)$. If $i$ is a special vertex and $t(\alpha)$ is not special (resp. $i$ not special and $j$ special), then $(i^+, \alpha, j) \succ (i^-,\alpha, j)$ (resp. $(i, \alpha, j^+) \succ (i,\alpha, j^-)$).
\end{enumerate}

We claim that the set
$$\mathcal G= \{\sum_{j\in Q_0^{sg}(s(\beta))}\lambda_j(i,\alpha, j)(j,\beta, k)\mid \alpha\beta\in I, i\in Q_0^{sg}(s(\alpha)), k\in Q_0^{sg}(t(\beta))\}$$
where $\lambda_j=-1$  if $j=l^-$ for some $l\in Q_0$, and $\lambda_j=1$ otherwise, is Gr\"obner basis for $I^{sg}$. By \cite[Theorem 2.13]{GHS17} it is enough to show that every overlap relation of any two elements of $\mathcal{G}$ completely reduces to $0$ by $\mathcal{G}$. It follows from the definition of $A^{sg}$ that any element in $\cG$ is a linear combination with at most two summands. 
Let $x=\lambda_1(i, \alpha, j)(j, \beta, k)+ \lambda_2(i, \alpha, j')(j', \beta,k)$ and $y=\mu_1(j,\beta,k)(k, \gamma, l) +\mu_2(j,\beta, k')(k', \gamma, l)$ be elements in $\mathcal G$ and $n, m\in \mathcal B$ such that $\mathbf o(x, y, m,n)$ is a overlap relation. Observe that in this case, by the definition of the overlap relation, $n$ and $m$ are arrows.
It is easy to check that if $x$ or $y$ are monomial relations, then $\mathbf o(x, y, m,n)$ is also a monomial relation in $A^{sg}$. Suppose that $x$ and $y$ are binomial relations, and 
suppose $t=\operatorname{tip}(x)=(i, \alpha, j)(j, \beta, k)$ and $t'=\operatorname{tip}(y)=(j,\beta,k)(k, \gamma, l)$, then the overlap relation is written as follows
 \begin{equation*}
  \begin{aligned}
   \mathbf{o}(x,y,n,m) &=\mu_1\lambda_2((i, \alpha, j')(j', \beta,k))(k, \gamma, l)-\lambda_1\mu_2 (i, \alpha, j)((j,\beta, k')(k', \gamma, l)) \\
&= -((i, \alpha, j')(j', \beta,k))(k, \gamma, l)+ (i, \alpha, j)((j,\beta, k')(k', \gamma, l)),
\end{aligned}
\end{equation*}
where $n=(k, \gamma, l)$ and $m=(i, \alpha, j)$. Observe that by the definition of the relations in $I^{sg}$ and by definition of  the admissible order $\succ$, we have that  $(j,\beta,k)$ is either $(s(\beta)^+, \beta, t(\beta)^+)$ or $(s(\beta)^{-}, \beta, t(\beta)^-)$. Moreover, there is no element $z\in \mathcal{G}$ such that $\operatorname{tip}(z)$ is starting with $(j', \beta, k)$ or $(j, \beta, k')$. Therefore any simple reduction  of $\mathbf{o}(x,y,n,m)$ replaces the second element $(i, \alpha, j)(j,\beta, k')(k', \gamma, l)$ with:
$$-((i, \alpha, j')(j', \beta,k))(k, \gamma, l)-(i, \alpha, j')(j',\beta, k')(k', \gamma, l).$$
Finally, if there exist an element $w$ in $\mathcal{G}$ such that $\operatorname{tip}(w)$ is starting with $(j', \beta, k')$  then we can reduce $(i, \alpha, j')(j',\beta, k')(k', \gamma, l)$ as follows:
$$-((i, \alpha, j')(j', \beta,k))(k, \gamma, l)-(i, \alpha, j')(j',\beta, k)(k, \gamma, l)=0.$$
Which implies that the quadratic basis $\mathcal G$  is a Gr\"obner basis, and as a consequence $A$ is a Koszul algebra.
\end{proof}

Before we state the last result of this section, we recall the definition of Koszul dual, see \cite{MV07} for details. The Koszul dual $A^!$ of a finite dimensional Koszul algebra $A$ is by definition the algebra $\operatorname{Ext}_A(A/ \operatorname{rad}(A), A/ \operatorname{rad}(A))^{op}$, which is isomorphic to the quadratic dual of $A$.  For the convenience of the reader we briefly recall the construction of $A^!$ for algebras of the form $KQ/I$. Let $V=KQ_2$ be the vector space generated by the paths of length two and $\{\gamma_1, \dots, \gamma_r\}$ be a basis of $V$. Denote by $V^{op}$ the vector space generated by paths of length $two$ in $Q^{op}$ with dual basis $\{\gamma_1^{op}, \dots, \gamma_n^{op}\}$.

Following \cite{MV07}, the orthogonal ideal $I_2^{\bot}$ is generated  by  $$ B=\{v\in V^{op}\mid \langle u,v\rangle=0 \text{\ for every \ } u\in I \},$$ where $\langle \, , \rangle:V \times V^{op}\to k $ is a bilinear form
defined on bases elements as follows:

$$\langle \gamma_i, \gamma_j^{op}\rangle=\begin{cases}
0 & \text{\ if \ } \gamma_i\neq \gamma_j,\\
1 & \text{\ otherwise. }
\end{cases}$$

Then the Koszul dual $A^!$ of $A$ is the path algebra $KQ^{op}/ I_{2}^{\bot}$.

\begin{prop}\label{prop:dual-algebra}
Let $A=KQ^{sg}/I^{sg}$ be the admissible presentation of a skew-gentle algebra  and $A^!$ its Koszul dual. Then the admissible ideal $I_2^{\bot}$ of $A^{!}$ is generated by:
\begin{itemize}
    \item paths of length two which are not a summand of a minimal generator of   $I^{sg}$ 
    \item commutativity relations in $I^{sg}$.
\end{itemize}
\end{prop}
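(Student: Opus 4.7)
My plan is to compute $I_2^{\bot}$ directly from the definition as the annihilator of the degree-$2$ component $I^{sg}\cap V$, where $V = KQ^{sg}_2$. The key observation is that the monomial and binomial generators of $I^{sg}$ have very controlled intersection behaviour on the basis of length-$2$ paths of $Q^{sg}$, so that the orthogonal complement splits into two transparent pieces matching the two bullets of the proposition.

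I would start by classifying every length-$2$ path $\omega = (i,\alpha,j)(j,\beta,k)$ of $Q^{sg}$ according to the pair $(\alpha,\beta)$ and the middle vertex $j$. Using the definition of $I^{sg}$ one checks that $\omega$ is a summand of at most one minimal generator, and this naturally partitions the set of length-$2$ paths into three classes: (a) paths with $\alpha\beta\in I$ and $t(\alpha)$ non-special, each of which is itself a monomial generator; (b) paths with $\alpha\beta\in I$ and $t(\alpha)$ special, which come in pairs $p_+ = (i,\alpha,j^+)(j^+,\beta,k)$ and $p_- = (i,\alpha,j^-)(j^-,\beta,k)$, each pair yielding one binomial generator $p_+ - p_-$; and (c) paths with $\alpha\beta\notin I$, which appear in no generator at all. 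Consequently $I^{sg}\cap V$ has a basis consisting of one element per class (a) and one binomial per pair in class (b).

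Next I would apply the pairing $\langle\cdot,\cdot\rangle\colon V\times V^{op}\to K$, which on the basis is the Kronecker delta. From the partition it is immediate that $V^{op}$ contains two obvious families of elements that pair to $0$ with every generator of $I^{sg}\cap V$: first, the dual $\omega^{op}$ of each path $\omega$ in class (c), since such an $\omega$ does not occur in any generator; second, for each binomial pair in class (b), the element $p_+^{op} + p_-^{op}$, since $\langle p_+ - p_-,\, p_+^{op}+p_-^{op}\rangle = 1-1 = 0$, while any other generator involves paths disjoint from $\{p_+,p_-\}$. These elements are clearly linearly independent in $V^{op}$, and their total count equals $|(\text{class c})| + \tfrac{1}{2}|(\text{class b})| = \dim V - \dim(I^{sg}\cap V)$, so by dimension they span $I_2^{\bot}$.

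Finally, I would translate the two families back into the language of the proposition: the duals $\omega^{op}$ for $\omega$ in class (c) are precisely the \emph{length-$2$ paths that are not a summand of a minimal generator of $I^{sg}$}, while the elements $p_+^{op}+p_-^{op}$ are the \emph{commutativity relations} dual to the binomial generators of $I^{sg}$ (the sign change from $p_+-p_-$ to $p_+^{op}+p_-^{op}$ is exactly what the pairing forces). I expect the main bookkeeping obstacle to be justifying cleanly that every length-$2$ path appears in at most one minimal generator of $I^{sg}$ and that the binomial pairs are genuinely disjoint; once this is checked, the partition-plus-dimension argument above finishes the proof.
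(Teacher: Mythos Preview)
Your proposal is correct and follows essentially the same route as the paper: identify which length-$2$ paths appear in which generators of $I^{sg}$, then read off the orthogonal complement with respect to the Kronecker pairing. The only real difference is that you finish with a dimension count (the exhibited elements are independent and have the right cardinality, so they span $I_2^{\bot}$), whereas the paper instead picks an arbitrary uniform element of $B$ and argues case by case that it must be either a single class-(c) monomial or a combination $\rho_1+\rho_2$ coming from a binomial pair. Your count is cleaner and avoids that casework.

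One point deserves more care. The pairing indeed forces the orthogonal of $p_+-p_-$ to be $p_+^{op}+p_-^{op}$, but the proposition speaks of \emph{commutativity relations in $I^{sg}$}, i.e.\ differences. The paper closes this gap by writing down an explicit algebra automorphism $\varphi\colon KQ^{sg}\to KQ^{sg}$ which negates exactly the arrows $(i,\alpha,j)$ with $i=s(\alpha)^{\pm}$, $j=t(\alpha)^{\pm}$ of matching sign; conjugating by $\varphi$ turns each $p_+^{op}+p_-^{op}$ into a genuine commutativity relation of the same shape as those in $I^{sg}$. Your parenthetical remark acknowledges the sign change but does not supply this isomorphism; you should make that step explicit, since otherwise the translation to ``commutativity relations in $I^{sg}$'' is not justified.
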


\begin{proof}
Let $W$  be the set of generators of the ideal $I^{sg}$, namely
$$W= \{\sum_{j\in Q_0^{sg}(s(\beta))}\lambda_j(i,\alpha, j)(j,\beta, k)\mid \alpha\beta\in I, i\in Q_0^{sg}(s(\alpha)), k\in Q_0^{sg}(t(\beta))\},$$
where $\lambda_j=-1$  if $j=l^{-}$ for some $l\in Q_0$, and $\lambda_j=1$ otherwise.

Then, the orthogonal ideal $I_2^{\bot}$ is generated  by  $$ B=\{v\in V^{op}\mid \langle u,v\rangle=0 \text{\ for every \ } u\in W \},$$ where $\langle \, , \rangle:V \times V^{op}\to k $ is a bilinear form
defined on bases elements as follows:

$$\langle \gamma_i, \gamma_j^{op}\rangle=\begin{cases}
0 & \text{\ if \ } \gamma_i\neq \gamma_j,\\
1 & \text{\ otherwise. }
\end{cases}$$

By definition of $B$, any path of length two which is not a summand of a minimal generator of $I^{sg}$ is an element of $B$. Let $x$ be uniform element in $B$ which is by definition of $V$ a linear combination of at most two paths. To fix notation, let $x=\lambda_1\rho_1+\lambda_2\rho_2$ be a linear combination of two paths $\rho_1$ and $\rho_2$ with the same source and target with $\lambda_1, \lambda_2 \in K^*$.

Suppose for contradiction that $\rho_1$ is a monomial relation. Then $\langle \rho_1, x\rangle=0$ implies that $\lambda_1=0$ which is a contradiction. Therefore without loss of generality, we have $\rho_1 - \rho_2 \in V$ and  $\langle \rho_1-\rho_2, x\rangle=0$ implies that $\lambda_1=\lambda_2$ that is $x = \rho_1+\rho_2$.

Using the isomorphism $\varphi:KQ^{sg} \to KQ^{sg}$ defined by

$$\varphi((i, \alpha, j))=\begin{cases}
-(i, \alpha, j) & \text{\, if\, } i=s(\alpha)^+ \text{\, and \, } j=t(\alpha)^+ \text{\, or \, }  i=s(\alpha)^- \text{\, and \, } j=t(\alpha)^-;\\
(i, \alpha, j) & \text{\, otherwise. \,}
\end{cases}$$
the result follows.
\end{proof}

The following Theorem shows how to compute the generalised ribbon graph of the Koszul dual of a skew-gentle algebra. 

\begin{thm-defn}\label{Thm:dual graph} 
Let $A$ be a skew-gentle algebra, and let $(S,M,\orbi, G_A)$ be the orbifold dissection of $A$.
Denote by $\gra_A^*$ the graph embedded in a surface obtained from $(S, M, \orbi)$ as follows.
\begin{itemize}
    \item In each boundary edge of the dissection  $\gra_A$, there is exactly one vertex of  $\gra_A^*$.
    In addition, any unmarked boundary in $(S,M,\orbi)$ is replaced by a vertex of $\gra_A^*$.
    \item Any orbifold point in  $\gra_A$ is also an orbifold point in $\gra_A^*$.
\end{itemize}

Then for every non-special edge $v$ of $\gra_A$ there is a unique edge in $\gra_A^*$ crossing $v$ exactly once.  Every special edge  of $\gra_A$ corresponds to an edge of $\gra_A^*$ connecting the orbifold point with the  unique vertex of $\gra_A^*$ such that the resulting edge does not cross any edge of $\gra_A$. We call these edges of $\gra_A^*$ special edges.

Then $\gra_A^*$ is the graph of the Koszul dual $A^{!}$.
\end{thm-defn}

\begin{proof}
This follows directly from the construction of $\gra_A^*$ and Proposition~\ref{prop:dual-algebra}.
\end{proof}

In Figure \ref{Fig:types_D5_and_tildeD6_dualGraphs} we can see the dual graphs of the generalised ribbon graphs from example \ref{ex:D_and_tildeD_ribbonGraphsAndSurfaces}.
\begin{figure}[ht!]
    \centering
\begin{tikzpicture}[scale=0.8]
\draw (0,0) circle (1.5cm);
\draw[semithick,blue!20!white] (90:1.5cm) to (270:1.5cm);
\draw[semithick,blue!30!white] (90:1.5cm)  ..controls (170:0.45cm) and (190:0.6cm) .. (210:1.5cm);
\draw[semithick,blue!30!white] (90:1.5cm) ..controls (10:0.45cm) and (350:0.6cm).. (330:1.5cm);
\draw[semithick,blue!20!white] (90:1.5cm) to (35:1.05cm);
\draw[semithick, red] (180:1.5cm) to[bend left] (240:1.5cm);
\draw[semithick, red] (240:1.5cm) to [bend left](300:1.5cm);
\draw[semithick, red] (300:1.5cm) to  [bend left] (0:1.5cm);
\draw[semithick,red] (0:1.5cm) to (35:1.05cm);
\filldraw (90:1.5cm) circle (1pt);
\filldraw (270:1.5cm) circle (1pt);
\filldraw (210:1.5cm) circle (1pt);
\filldraw (330:1.5cm) circle (1pt);
\filldraw[red] (180:1.5cm) circle (1pt);
\filldraw[red] (240:1.5cm) circle (1pt);
\filldraw[red] (300:1.5cm) circle (1pt);
\filldraw[red] (0:1.5cm) circle (1pt);
\filldraw[red] (35:1.05cm) circle(0.02cm);
\node at (35:1.05cm) {\tiny$\times$};
\draw (4,0) circle (1.5cm);
\filldraw (350:1.5cm) ++(4cm,0pt)circle (1pt) coordinate (c);
\draw[white] (170:0.45cm) ++(4cm,0pt) circle (0.4pt) coordinate (d);
\draw[white] (190:0.6cm) ++(4cm,0pt) circle (0.4pt) coordinate (e);
\draw[white] (20:0.6cm) ++(4cm,0pt) circle (0.4pt) coordinate (f);
\draw[white] (5:0.75cm) ++(4cm,0pt) circle (0.4pt) coordinate (g);
\draw[semithick,blue!30!white] (4,1.5) to (4,0);
\draw[semithick,blue!30!white] (4,1.5) to (4,0.5);
\draw[semithick,blue!30!white] (4,1.5) ..controls (f) and (g).. (c);
\draw[semithick,blue!30!white] (4,1.5) to (3.4,0.3);
\filldraw (350:1.5cm) ++(4cm,0pt)circle (1pt) coordinate (c);
\filldraw (4,1.5)circle (1pt) coordinate (c);
\node at  (4,0) {\tiny$\times$};
\node at  (3.43,0.35) {\tiny$\times$};
\filldraw[red] (270:1.5cm) ++(4cm,0pt) circle (1pt) coordinate (a);
\filldraw[red] (45:1.5cm) ++(4cm,0pt) circle (1pt) coordinate (b);
\draw[semithick, red] (4,-1.5cm) to (4,0);
\draw[semithick, red] (4,-1.5) to (3.43, 0.35);
\draw[semithick, red] (4,-1.5) ..controls (4.24,-0.2) and (4.85, 0.75) ..(b);
%
%
\end{tikzpicture}
    \caption{Dissections for the skew-gentle algebra and its Koszul dual associated to $\mathbb{D}_5$ on the left and $A_2$ on the right from Example\ref{ex:D_and_tildeD_are_skew-gentle}}
    \label{Fig:types_D5_and_tildeD6_dualGraphs}
\end{figure}
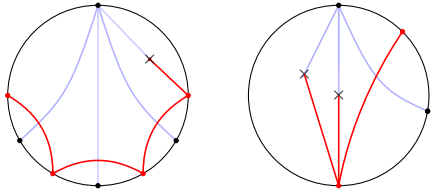

\begin{exmp}
Let $Q$ be the quiver\\
\begin{figure}[ht]
\centering
\begin{tikzcd}
 & \bullet \arrow[dr,"\alpha_2"] \arrow[out=35,in=125,loop,swap, "\varepsilon"] & \\
\bullet \arrow[ur, "\alpha_1"]    &    &  \bullet \arrow[ll, "\alpha_3"]
\end{tikzcd}
\end{figure}
\\
and $\mathcal{R}_1=\{\alpha_1\alpha_2,\alpha_2\alpha_3,\alpha_3\alpha_1,\varepsilon^2-\varepsilon\}$ and $\mathcal{R}_2=\{\alpha_1\alpha_2,\alpha_2\alpha_3, \varepsilon^2-\varepsilon\}$. The algebras $A_1=KQ/\langle\mathcal{R}_1\rangle$ and $A_2=KQ/\langle \mathcal{R}_2\rangle$ are skew-gentle. Orbifold dissections and their duals are depicted in Figure \ref{Fig:ribbon-and-dual-3cycle-with-loop}.
\begin{figure}[ht!]
    \centering
    \begin{tikzpicture}[scale=0.8]
\node at (2cm,-2.5cm) {Orbifold dissection and its dual graph of $A_1$};
\draw (0,0) circle (1.5cm);
\draw[semithick,blue!50!white] (90:1.5cm) to[out=180,in=180] (270:1.5cm);
\draw[semithick,blue!50!white] (90:1.5cm) to[out=0, in=0] (270:1.5cm);
\draw[semithick, blue!50!white] (90:1.5cm) to (0,0);
\draw[pattern=north west lines, thick] (270:0.7) circle(5pt);
\draw[pattern=north west lines, thick] (270:0.7) circle(5pt);
\filldraw (90:1.5cm) circle (1pt);
\filldraw (270:1.5cm) circle (1pt);
\node at (0,0) {\tiny$\times$};
\draw (4,0) circle (1.5cm);
\filldraw (90:1.5cm) ++(4,0)circle (1pt) coordinate (a);
\filldraw (270:1.5cm) ++(4,0)circle (1pt) coordinate (b);
\draw[semithick,blue!20!white] (a) to[out=180,in=180] (b);
\draw[semithick,blue!20!white] (a) to[out=0, in=0] (b);
\draw[semithick, blue!20!white] (a) to (4,0);
\filldraw (a) circle (1pt);
\filldraw (b) circle (1pt);
\node at (4,0) {\small$\times$};
\filldraw[semithick, red] (180:1.5) ++(4,0) circle (1pt) coordinate (c);
\filldraw[semithick, red] (0:1.5) ++(4,0) circle (1pt) coordinate (d);
\filldraw[semithick, red] (270:0.9)++(4,0) circle (1pt) coordinate (e);
\draw[semithick, red] (e) to (c);
\draw[semithick, red] (e) to (d);
\draw[semithick, red] (e) to (4,0);
\filldraw[semithick, red] (4,0) circle (0.02);
\node at (11cm,-2.5cm) {Orbifold dissection and it dual graph of $A_2$};
\draw (9,0) circle (1.5cm);
\draw[pattern=north west lines, thick] (9,0) circle(0.3cm);
\filldraw (45:1.5cm) ++(9,0)circle (1pt) coordinate (f);
\filldraw (0:0.3cm) ++(9,0)circle (1pt) coordinate (g);
\filldraw (0:1) ++(9,0) circle (0.01cm) coordinate (i);
\node at (i) {\small$\times$};
\draw[semithick,blue!40!white] (f) to (g);
\draw[semithick,blue!40!white] (g) to (i);
\draw[semithick, blue!40!white] (g) to[out=45, in=0] (9,0.75);
\draw[semithick, blue!40!white] (g) to[out=-45, in=0] (9,-0.75);
\draw[semithick, blue!40!white] (9,0.75) to[out=170, in=90] (8.3,0);
\draw[semithick, blue!40!white] (9,-0.75) to[out=-170, in=-90] (8.3,0);
\filldraw (f) circle (1pt);
\filldraw (g) circle (1pt);
\draw (13,0) circle (1.5cm);
\draw[pattern=north west lines, thick] (13,0) circle(0.3cm);
\filldraw (45:1.5cm) ++(13,0)circle (1pt) coordinate (h);
\filldraw (0:0.3cm) ++(13,0)circle (1pt) coordinate (j);
\filldraw (0:1) ++(13,0) circle (0.01cm) coordinate (k);
\node at (k) {\small$\times$};
\draw[semithick,blue!20!white] (h) to (j);
\draw[semithick,blue!20!white] (j) to (k);
\draw[semithick, blue!20!white] (j) to[out=45, in=0] (13,0.75);
\draw[semithick, blue!20!white] (j) to[out=-45, in=0] (13,-0.75);
\draw[semithick, blue!20!white] (13,0.75) to[out=170, in=90] (12.3,0);
\draw[semithick, blue!20!white] (13,-0.75) to[out=-170, in=-90] (12.3,0);
\draw[semithick, red] (13,-1.5) to[out=30, in=-60] (14.2,0.5);
\draw[semithick, red] (12.1,0.5) to[out=-120, in=150] (13,-1.5);
\draw[semithick, red] (14.2,0.5) to[out=120, in=60] (12.1,0.5);
\filldraw[red] (13,-1.5) circle (1pt);
\filldraw[red] (13,-0.3) circle(1pt);
\draw[semithick, red] (13,-1.5) to (13,-0.3);
\draw[semithick, red] (13, -1.5) to (k);
\filldraw[red] (k) circle (1pt);
\filldraw (h) circle (1pt);
\filldraw (j) circle (1pt);
\end{tikzpicture}
    \caption{Orbifold dissection and dual graph for $A_1$ and $A_2$}
    \label{Fig:ribbon-and-dual-3cycle-with-loop}
\end{figure}
\end{exmp}

\section{Graded curves in an orbifold dissection}\label{subsec:decorated curves}

In this section we define graded curves in an orbifold with marked points. 
We begin by recalling from \cite{Chas-Gadgil} the notion of homotopy in an orbifold based on what is called skein relations in that paper. 

Let $A$ be a skew-gentle algebra and  $O$ be the associated orbifold. We  recall the notion of $O$-free homotopy from~\cite{Chas-Gadgil}.

\begin{defn}\label{Def: orbi-homotopic}
Two  oriented closed curves $\gamma$ and $\gamma'$ in  $O$ 
are 
$O$-homotopic if they are related by a finite number of moves given by either a homotopy in the complement of the orbifold points or  are related by moves   taking place in a disk $D$ containing exactly one orbifold point $o_x$ as in Figure~\ref{fig:skein_relation1}. That is,  a segment of a curve with no self-intersection in $D$ and passing through $o_x$ is $O$-homotopic relative to its endpoints to a segment spiralling around $\omega$ in either direction exactly once as in Figure~\ref{fig:skein_relation1}. 
\end{defn}

\begin{figure}[ht!]
    \centering
   \begin{tikzpicture}[thick,scale=0.65, every node/.style={scale=0.65}]
       \draw (0,0) circle (1.5cm);
       \node at (0,0) {$\times$};
       \draw (-1.5,0) to[out=55, in=95] (0.75,0);
      \draw[<-] (-1.5, 0) to[out=-55, in=-95] (0.75,0);
      \node at (2,0) {$\sim$};
       \draw (4,0) circle (1.5cm);
       \node at (4,0) {$\times$};
       \draw[<-] (2.5,0) to[out=55, in=95] (4,0);
      \draw (2.5, 0) to[out=-55, in=-95] (4,0);
     \node at (6,0) {$\sim$};
       \draw (8,0) circle (1.5cm);
       \node at (8,0) {$\times$};
       \draw[<-] (6.5,0) to[out=55, in=95] (8.75,0);
      \draw (6.5, 0) to[out=-55, in=-95] (8.75,0);
       \end{tikzpicture}
    \caption{Moves in a disk containing exactly one orbifold point.}
    \label{fig:skein_relation1}
\end{figure}
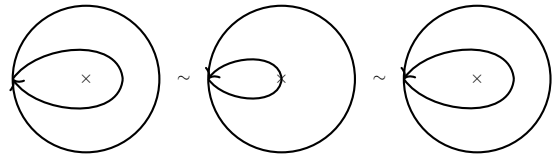

As a consequence of Definition \ref{Def: orbi-homotopic}, we have the following $O$-homotopic curves in $O$. 
 \begin{figure}[ht!]
     \centering
    \hspace*{0cm}    \begin{tikzpicture}[thick,scale=0.65, every node/.style={scale=0.65}]
       \draw (0,0) circle (1.5cm);
       \node at (0,0) {$\times$};
        \draw (-1.5,0) to[out=55, in=95] (0.75,0);
        \draw[<-] (-1.5,0) to[out=-55, in=-120] (0.35,-0.2);
        \draw(-0.35, -0.2) to[out=-65, in=-95] (0.75,0);
        \draw (-0.35,-0.2) .. controls (-0.4, -0.025) and (-0.175,0.15) .. (0,0.15)
               .. controls (0.175, 0.15) and (0.4,0.025) .. (0.35,-0.2);
        \node at (2,0) {$\sim$};
       \draw (4,0) circle (1.5cm);
       \node at (4,0) {$\times$};
       \draw[<-] (2.5,0) to[out=55, in=95] (3.5,0);
      \draw (2.5, 0) to[out=-55, in=-95] (3.5,0);
     \node at (6,0) {$\sim$};
       \draw (8,0) circle (1.5cm);
       \node at (8,0) {$\times$};
        \draw[<-](6.5,0) to[out=55, in=95] (8.75,0);
        \draw (6.5,0) to[out=-55, in=-120] (8.35,-0.2);
        \draw(7.65, -0.2) to[out=-65, in=-95] (8.75,0);
        \draw (7.65,-0.2) .. controls (7.6, -0.025) and (7.825,0.15) .. (8,0.15)
               .. controls (8.175, 0.15) and (8.4,0.025) .. (8.35,-0.2);
       \end{tikzpicture}
         \caption{$O$-homotopic curves in a disk containing one orbifold point.}
    \label{fig:skein_relation2}
\end{figure}
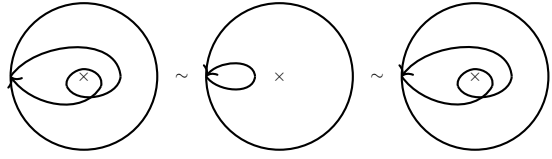

\begin{defn}\label{Def:arcs}
Let $O=(S,M,\orbi)$  be an orbifold with set $\orbi$ of orbifold points of order 2 with a finite set $M$ of marked points in the boundary component of $S$ or in the interior of $S$. Let $x, y$ be marked points.

\begin{enumerate}
    \item If $x$ and $y$ are marked points in the boundary of $S$, a  \emph{finite arc} (or simply an arc) $\gamma$ from $x$ to $y$ is an $O$-homotopy class, relative to endpoints, of non-contractible curves $\gamma$ from $x$ to $y$ in $O$.
\item A \emph{closed curve} $\gamma$  is a free $O$-homotopy class of non-contractible closed curves $\gamma$ not passing through any orbifold points.

\item
An \emph{infinite arc} is an $O$-homotopy class $\gamma$ associated to rays, that is a continuous maps $r:(0,1]\to O$ (or $r:[0,1) \to O$) or $l:(0,1) \to O$ respectively, that wrap around an unmarked boundary component in a clockwise way, asymptotically approaching this boundary.

Recall that two rays $r : (0, 1] \to  O$ and $r' : (0, 1] \to  O$ are 
$O$-homotopic if they wrap infinitely many times around the same unmarked boundary component $B$, their endpoints coinciding in the marked point $x$ in the boundary, and if for every closed 
neighbourhood $N$ of $B$ the induced maps $r,r' : \left[0,1\right] \to O \setminus N$ are $O$-homotopic 
relative to their endpoints. Similarly, we say two lines $l : (0,1)\to  O$ and $l' : (0, 1) \to O$ are equivalent if they wrap infinitely many times around the same unmarked boundary components $B$ and $B'$ on either end and if for every closed neighbourhood $N$ of $B$ and $N'$ of $B'$ the 
induced maps $l, l' : [0, 1] \to  O \setminus (N \cup N')$ are $O$-homotopic relative to their endpoints.
\end{enumerate}
\end{defn}

Our main result requires a notion of grading on  arcs and closed curves. This grading depends on the dual graph $\gra^*_A$ of a skew-gentle algebra $A$. Therefore before giving the definition, we will need some results on the geometry of the graph $\gra^*_A$. Moreover, since every unmarked boundary in a orbifold dissection $(S,M, \orbi, G)$ of $A$ is replaced by a vertex of $G^*$, we will view unmarked boundary components as marked points in the interior. In particular, it will be useful to think of infinite arcs wrapping around a boundary component  as infinite arcs wrapping around a marked points in the interior. We note that in our model we only consider infinite arcs as in Definition~\ref{Def:arcs}(3), that is  only those infinite arcs that wrap around an unmarked boundary component (ie a marked point in the interior of the surface) in a clockwise way.

\begin{lem}\label{lem:dual-dissection}
Let $A=KQ/I$ be a skew-gentle algebra with orbifold $O_A = (S,M,\orbi)$ with set of marked points $M$ and embedded  generalised ribbon graph $\gra_A$. Then the dual graph $\gra^*_A$ subdivides $O_A$ into polygons and degenerate polygons, where the edges of each such polygon are edges of $\gra^*_A$ and exactly one boundary segment containing exactly one marked point of $M$. 
\end{lem}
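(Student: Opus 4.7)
The plan is to exploit a Poincaré-type duality between $\gra_A$ and $\gra_A^*$, suitably modified for the orbifold setting. By Theorem-Definition~\ref{Thm:dual graph} combined with Proposition~\ref{Prop:degenerate-polygons}, each (possibly degenerate) polygon of the $\gra_A$-dissection of $O_A$ contains exactly one vertex of $\gra_A^*$: on the unique boundary segment if the polygon is of type (a), or replacing the unmarked interior boundary if it is of type (b). Non-special edges of $\gra_A^*$ cross the corresponding non-special edges of $\gra_A$ transversally exactly once, while the special edges of $\gra_A^*$ live entirely inside the degenerate polygons of $\gra_A$ and cross no edge of $\gra_A$.

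To identify the faces of $\gra_A^*$, I will produce, for each marked point $m \in M$, a (possibly degenerate) polygon $F_m$ containing $m$. Listing the edges $e_1,\ldots,e_k$ of $\gra_A$ incident to $m$ in the cyclic order induced by the orientation of $S$, each consecutive pair $e_i, e_{i+1}$ bounds a sector at $m$ lying inside a (degenerate) polygon of the $\gra_A$-dissection, which by the previous step contains exactly one vertex of $\gra_A^*$. Traversing the dual edges $e_i^*$ in cyclic order---noting that $e_i^*$ is a special edge of $\gra_A^*$ through an orbifold point of $\orbi$ whenever $e_i$ is a special edge of $\gra_A$---produces a polygonal path of edges of $\gra_A^*$ from the vertex $v_l$ of $\gra_A^*$ on the boundary segment on one side of $m$ to the vertex $v_r$ on the boundary segment on the other side. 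Closing this path along the portion of $\partial S$ from $v_r$ through $m$ to $v_l$ then bounds the desired polygon $F_m$: its unique boundary segment contains the single marked point $m \in M$, and all other boundary pieces are edges of $\gra_A^*$ (possibly with repetitions at orbifold points, which is what makes $F_m$ degenerate).

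To finish, I will verify that the family $\{F_m\}_{m \in M}$ exhausts the face decomposition of $O_A$ induced by $\gra_A^*$. It suffices to show that every connected component of $O_A \setminus \gra_A^*$ contains exactly one marked point of $M$: existence follows from a local analysis of the wedges inside each polygon of the $\gra_A$-dissection, each of which extends through a dual edge to reach a marked point on the outer boundary, and uniqueness follows since any path joining two marked points along $\partial S$ is interrupted by a vertex of $\gra_A^*$ (by Theorem-Definition~\ref{Thm:dual graph}), while any interior path between two marked points must cross a dual edge. The main obstacle will be the careful bookkeeping around orbifold points, but since the special edges of $\gra_A^*$ do not cross any edges of $\gra_A$ by construction, no new faces are introduced and the local picture at each orbifold point is consistent with $F_m$ being pinched at $o$ via the corresponding dual special edge.
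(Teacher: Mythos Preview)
Your argument is correct in outline and gives a self-contained proof, but it takes a genuinely different route from the paper. The paper does \emph{not} build the faces $F_m$ directly. Instead, it reduces to the gentle case: letting $\Lambda$ be the gentle algebra obtained by deleting the special loops, it invokes \cite[Lemma~2.6]{OPS18} to get the desired polygon decomposition for $\gra_\Lambda^*$ on $S_\Lambda$, and then tracks what a single local replacement does to this picture. Namely, if $\gamma$ is a special edge of $\gra_\Lambda$ and $\gamma^*$ is the unique edge of $\gra_\Lambda^*$ crossing it, then after local replacement $\gamma^*$ becomes a special edge of $\gra_A^*$ ending at the new orbifold point, and the two polygons $P,P'$ of $\gra_\Lambda^*$ that shared $\gamma^*$ merge into a single degenerate polygon of $\gra_A^*$. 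That is the whole proof.

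Your Poincar\'e-duality construction is more hands-on and has the virtue of not relying on the external input from \cite{OPS18}; it would in fact reprove that lemma in the gentle case as a special instance. The price is the extra bookkeeping you flag at orbifold points: when $e_i$ is a special edge of $\gra_A$, the two sectors at $m$ adjacent to $e_i$ lie in the \emph{same} degenerate polygon of $\gra_A$, so your polygonal path reaches the same vertex $v$ of $\gra_A^*$ from both sides and must detour along the special edge $v\!-\!o\!-\!v$ of $\gra_A^*$, which is exactly the pinch making $F_m$ degenerate. This is what you describe, though your phrasing ``$e_i^*$ is a special edge of $\gra_A^*$ through an orbifold point'' slightly obscures that, by Theorem-Definition~\ref{Thm:dual graph}, this special dual edge does \emph{not} cross $e_i$. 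The paper's reduction avoids having to spell this out because the merging of $P$ and $P'$ under local replacement does it automatically.
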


\begin{proof}
Let $\Lambda$ be the gentle algebra obtained from $A$ by  deleting all special loops in  $Q$ and let $(S, M_\Lambda, \gra_\Lambda)$ be   surface dissection associated to $\Lambda$.
By \cite[Lemma 2.6]{OPS18} and Remark \ref{Rem:lamination}, the dual graph $\gra^*_{\Lambda}$ of $\Lambda$ subdivides $S_{\Lambda}$ into polygons and degenerate polygons, where the edges of each such polygon are edges of $\gra^*_\Lambda$ and exactly one boundary segment containing exactly one marked point of $M_\Lambda$. 

It is enough to observe that after a local replacement of a special edge $\gamma$ in $\gra_{\Lambda}$, the unique edge $\gamma^*$ of $\gra^*_\Lambda$ crossing $\gamma$ corresponds to a special edge in $\gra^*$. By definition $\gamma^*$ is connected to an orbifold point $o_x$. 
Let $P$ and $P'$ be the polygons in of $\gra^*_\Lambda$ in $S$ sharing the common edge $\gamma^*$. As a consequence of the local replacement,  $P$ and $P'$ correspond to a single degenerate polygon in $S$ containing $o_x$, see Figure~\ref{fig:local-replacement-dual}. The result follows. 
\begin{figure}
 \centering
    \begin{tikzpicture}
\draw[dashed] (90:1cm) to[bend left](0:1cm);
\draw[red] (180:1cm) to[bend left] (-0.7,-0.7);
\draw[red] (-0.7,-0.7) to[bend left] (270:1);
\draw[blue!40!white] (80:0.96) to[bend right] node[below] {\textcolor{black}{\tiny$\gamma$}} (10:0.96);
\draw[red] (180:1) to[bend right] (145:1);
\draw[red] (270:1) to[bend left](305:1cm);
\draw[red] (45:0.9) to node[above] {\textcolor{black}{\tiny$\gamma^*$}} (-0.7, -0.7);
\filldraw[red] (45:0.9) circle (1pt);
\filldraw (80:0.96) node[above] {\tiny$w$} circle (1pt);
\filldraw (10:0.96) node[right] {\tiny$w'$} circle (1pt);
\filldraw[red] (-0.7,-0.7) circle (1pt);
\filldraw[red](180:1) circle (1pt);
\filldraw[red](270:1) circle (1pt);
\node at (-0.6,0) {\tiny$P$};
\node at (0,-0.4) {\tiny $P'$};
\draw[blue!40!white] (80:0.96) to[bend left] (95:0.75);
\draw[blue!40!white] (10:0.96) to[bend right] (355:0.75);
\filldraw[red] (45:0.9)++(4,0) node[right] {\tiny$w=w'$} circle (1pt) coordinate (a);
\filldraw[red] (3.3,-0.7) circle (1pt) coordinate (d);
\filldraw[red](180:1)++(4,0) circle (1pt) coordinate (e);
\filldraw[red](270:1)++(4,0) circle (1pt) coordinate (f);
\filldraw[red] (145:1)++(4,0) circle (0.001) coordinate (g);
\filldraw[red] (305:1)++(4,0) circle (0.001) coordinate (h);
\filldraw[blue!40!white] (95:0.75)++(4,0) circle (0.001) coordinate (i);
\filldraw[blue!40!white] (355:0.75)++(4,0) circle (0.001) coordinate (j);
\draw[dashed] (4,1) to[bend left](5,0);
\draw[red] (e) to[bend left] (3.3,-0.7);
\draw[red] (3.3,-0.7) to[bend left] (f);
\draw[blue!40!white] (a) to node[below] {\textcolor{black}{\tiny$\gamma$}} (4,0);
\draw[red] (e) to[bend right] (g);
\draw[red] (f) to[bend left](h);
\draw[red] (3.3, -0.7) to node[above] {\textcolor{black}{\tiny$\gamma^*$}} (4,0);
\filldraw (45:0.9)++(4,0) circle (1pt);
\filldraw[red] (3.3,-0.7) circle (1pt);
\filldraw[red](e) circle (1pt);
\filldraw[red](f) circle (1pt);
\node at (3.4,0) {\tiny$P$};
\node at (4,-0.4) {\tiny $P'$};
\node at (4,0) {$\times$};
\draw[blue!40!white] (a) to[bend left] (i);
\draw[blue!40!white] (a) to[bend right] (j);
\end{tikzpicture}
\caption{}
\label{fig:local-replacement-dual}
\end{figure}
\end{proof}

By the Lemma~\ref{lem:dual-dissection}, $\gra^*_A$ induces a dissection of $O_A$. Furthermore, the generalised polygons of $\gra^*_A$ are in bijection with vertices of $G_A$.  

\begin{rem}\label{rem:minimalposition}
(1) We assume that any finite collection of curves is in  \emph{minimal position}, that is, the number of intersections of each pair of (not necessarily distinct) curves in this set is minimal. 

(2)  If $\gamma$ is an arc or closed curve in an orbifold dissction $\dissection$ we always assume that  $\gamma$ crosses every edge of $G$ transversely. 

(3) In Figure~\ref{fig:orbi-equivalent-curves} we give examples of $O$-homotopic curves.  In each case the first curve represents the chosen representative in its $O$-homotopic class which we will usually be working with. 

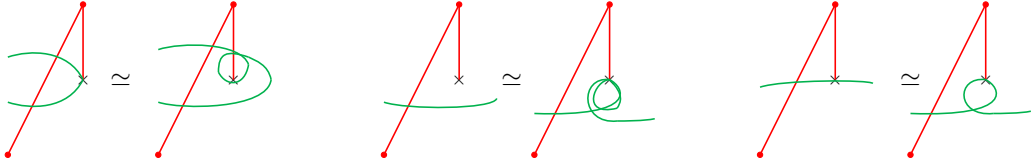
\begin{figure}[ht!]
    \centering
   \begin{tikzpicture}
\filldraw[red] (0,0) circle (1pt);
\filldraw[red] (1,2) circle (1pt);
\node at (1,1) {\tiny$\times$};
\draw[semithick,red] (0,0) to (1,2);
\draw[semithick,red] (1,2) to (1,1);
\draw[semithick,blue!30!green] (0,1.3) ..controls (0.3,1.4) and (0.8,1.4)..(1,1);
\draw[semithick,blue!30!green] (0,.7) ..controls (0.3, 0.6) and (0.8, 0.6)..(1,1);
\node at (1.5,1) {$\simeq$};
\filldraw[red] (2,0) circle (1pt);
\filldraw[red] (3,2) circle (1pt);
\node at (3,1) {\tiny$\times$};
\draw[semithick,red] (2,0) to (3,2);
\draw[semithick,red] (3,2) to (3,1);
\draw[semithick,blue!30!green] (2,1.4) ..controls (2.3,1.5) and (3.1,1.5)..(3.2,1.2);
\draw[semithick,blue!30!green] (2,.7) ..controls (2.3, 0.6) and (3.45, 0.6)..(3.5,1);
\draw[semithick,blue!30!green] (3.2,1.2) ..controls (3.15,0.9) and (2.9,0.9)..(2.8,1.1);
\draw[semithick,blue!30!green] (2.8,1.1) ..controls (2.8,1.35) and (2.9,1.4)..(3.2,1.3);
\draw[semithick,blue!30!green] (3.2,1.3) ..controls (3.3,1.25) and (3.46,1.2)..(3.5,1);
\filldraw[red] (5,0) circle (1pt);
\filldraw[red] (6,2) circle (1pt);
\node at (6,1) {\tiny$\times$};
\draw[semithick,red] (5,0) to (6,2);
\draw[semithick,red] (6,2) to (6,1);
\draw[semithick,blue!30!green] (5,.7) ..controls (5.3, 0.6) and (6.45, 0.6)..(6.5,0.75);
\node at (6.7,1) {$\simeq$};
\filldraw[red] (7,0) circle (1pt);
\filldraw[red] (8,2) circle (1pt);
\node at (8,1) {\tiny$\times$};
\draw[semithick,red] (7,0) to (8,2);
\draw[semithick,red] (8,2) to (8,1);
\draw[semithick,blue!30!green] (7,.55) ..controls (8.4, 0.5) and (8.2, 0.9)..(8,1);
\draw[semithick,blue!30!green] (8,1) ..controls (7.8,1) and (7.6,0.5)..(8.1,0.62);
\draw[semithick,blue!30!green] (8.1,0.62) ..controls (8.15,0.7) and (8.2, 0.8)..(8,1);
\draw[semithick,blue!30!green] (8,1) ..controls (7.6,1.05) and (7.6,0.4)..(8.1,0.45);
\draw[semithick,blue!30!green] (8.1,0.45) ..controls (8.2,0.45) and (8.46,0.45)..(8.6,0.48);
\filldraw[red] (10,0) circle (1pt);
\filldraw[red] (11,2) circle (1pt);
\node at (11,1) {\tiny$\times$};
\draw[semithick,red] (10,0) to (11,2);
\draw[semithick,red] (11,2) to (11,1);
\draw[semithick,blue!30!green] (10,0.9) ..controls (10.3, 1) and (11.45, 1)..(11.5,0.95);
\node at (12,1) {$\simeq$};
\filldraw[red] (12,0) circle (1pt);
\filldraw[red] (13,2) circle (1pt);
\node at (13,1) {\tiny$\times$};
\draw[semithick,red] (12,0) to (13,2);
\draw[semithick,red] (13,2) to (13,1);
\draw[semithick,blue!30!green] (12,.55) ..controls (13.4, 0.5) and (13.2, 0.9)..(13,1);
\draw[semithick,blue!30!green] (13,1) ..controls (12.6,1.05) and (12.6,0.5)..(13.1,0.55);
\draw[semithick,blue!30!green] (13.1,0.55) ..controls (13.2,0.55) and (13.46,0.55)..(13.6,0.58);
\end{tikzpicture}
    \caption{$O$-homotopic curves}
    \label{fig:orbi-equivalent-curves}
\end{figure}

\end{rem}

\begin{defn}
Given a skew-gentle algebra $A$ and the associated graph $\gra_A$ with vertex set $M$, with dual graph $\gra^*_A$ and associated orbifold $O = (S,M, \orbi)$, we call the tuple $(S, M, \orbi, \gra^*_A)$ the \emph{orbifold dissection associated} to $A$. 
\end{defn}

We note that if  $(S, M,\orbi, \gra^*)$ is an orbifold dissection and if $\gamma$ is a possibly infinite arc  or a closed curve in $O$ then $\gamma$ is completely determined by the possibly infinite sequence of edges of $\gra^*$ which it crosses. If $\gamma$ is a closed curve then this sequence is determined up to cyclic permutation. 

\begin{defn}
Let $\gamma$ be an arc or a closed curve in an  orbifold dissection $(S,M,\orbi, \gra)$ and let $\left(x_i\right)$ be the ordered \emph{multiset of edges} of the dual graph $\gra^*$ of $\gra$  crossed successively by $\gamma$.

Let $\left[x_{i}, x_{i+1}\right]$ be the oriented segment going from $x_i$ to $x_{i+1}$. Then  both $x_i$ and $x_{i+1}$ are edges of the same  (degenerate) polygon $P$ which contains exactly one marked point $m \in M$.  A \emph{grading} on $\gamma$ is a function $f:\left(x_i\right) \to \mathbb Z$ satisfying the following conditions.

$$f(x_{i+1})=
\begin{cases}
f(x_i)+1 & \textrm{if $m$ is to the left of $\left[x_{i}, x_{i+1}\right]$ in $P$;} \\
f(x_i)-1 & \textrm{if $m$ is to the right of $\left[x_{i}, x_{i+1}\right]$ in $P$}
\end{cases}$$

If $\gamma$ is an (infinite) arc, we say that $(\gamma, f)$ is an (infinite) \emph{graded arc} on $S_{A}$.
If $\gamma$ is a closed curve successively crossing the edges $x_1, \dots, x_n$ of $G^*$,
we say that $(\gamma, f)$ is a \emph{graded closed curve} if the grading  $f$ is such that $f(x_1) = f(x_n)$. 
\end{defn}

\section{Indecomposable objects in the derived category of a skew-gentle algebra}\label{sec:indecomposables-objects}

In this section, given a skew-gentle algebra $A$, we show that the geometric model constructed in Section \ref{sec:geometricmodel} is a model for the bounded derived category $D^b(A)$. More precisely, using the equivalence $D^b(A)$ and $K^{-,b}(proj-A)$, we  establish a one to one correspondence between the homotopy strings and bands encoding the indecomposable objects in $K^{-,b}(proj-A)$ and graded arcs and curves in the orbifold associated to $A$.

\subsection{Homotopy strings and bands}\label{subsec:homotopystrings}

We begin by  briefly recalling the definition of homotopy strings and bands from \cite{BMM03}. Throughout this section let $A$ be a skew-gentle algebra and let  $\Lambda=KQ/I$ be the associated gentle algebra obtained from $A$ by deleting all special loops.

The definition of homotopy strings and bands for skew-gentle algebras is based on another gentle algebra $A_+$ underlying $A$ in the following way. \yad{Let $B$ a minimal set of relations of $\Lambda=KQ/I$} and  set $J  = \langle B\setminus \{\alpha\beta\mid \alpha\beta\in I, t(\alpha)\in Sp\} \rangle$. Then by \cite{BMM03} $A_+= KQ/ J$ is a gentle algebra. 

For every arrow $\alpha \in Q_1$, we define a formal inverse arrow $\overline{\alpha}$ where $s(\overline{\alpha}) = t(\alpha)$ and $t(\overline{\alpha}) = s(\alpha)$. For each path $w=\alpha_1\dots\alpha_k$ we define $\overline{(\alpha_1\dots \alpha_k)}=\overline{\alpha_k}\dots \overline{\alpha_1}$, $s(\overline{w})=e(w)$ and $t(\overline{w})=s(w)$.
A \emph{walk} $w$ is sequence $w_1 \dots w_n$ where $w_i$ is either an arrow or an inverse arrow such that $s(w_{k+1})=e(w_k)$. 

A \emph{string} is a walk $w=w_1\dots w_n$ such that $w_{k+1}\neq  \overline{w_k}$ for $1\leq k <n$ and such that no substring $w'$ of $w$ or its inverse $\overline{w'}$ is in $J$. For every $i\in Q_0$,  we denote by $e_i$ the string corresponding to the \emph{trivial walk} at $i$. 

A string $w=w_1 \dots w_n$ is a \emph{direct } (resp. \emph{inverse}) \emph{homotopy letter}  if  $w_k$ is a direct (resp. inverse) arrow, for all $1\leq k \leq n$.
A \emph{homotopy walk} $\sigma$ is a sequence $\sigma_1\dots \sigma_r$, where $\sigma_k$ is a direct or inverse homotopy letter such that $s(\sigma_{k+1}) = t(\sigma_k)$. We say that $\sigma=\sigma_1\dots \sigma_r$ is a \emph{direct}  (resp. \emph{inverse}) homotopy walk if  $\sigma_k$ is  direct (resp. inverse), for all $1\leq k \leq r$.
A walk $\sigma$ (resp. a homotopy walk) is \emph{closed} if $s(\sigma)=t(\sigma)$. Given a (homotopy) walk $\sigma=\sigma_1 \dots \sigma_r$ we denote by $\sigma[m]=\sigma_{m+1} \dots \sigma_r\sigma_1 \dots \sigma_j$  its \emph{rotations} where $m=1, \dots, r-1.$

\begin{defn}
Let $A$ be a skew-gentle algebra with $\Lambda=KQ/I$ be the associated gentle algebra obtained from $A$ be deleting all special loops, and $A_+=KQ/J$ the associated gentle algebra as defined above. 
\begin{enumerate}
\item A \emph{homotopy string} is a homotopy walk $\sigma=\sigma_1\dots \sigma_r$ such that
\begin{itemize}
    \item if both $\sigma_i, \sigma_{i+1}$ are direct (resp. inverse) homotopy letters such that $\sigma_i$  (resp. $\overline{\sigma_{i+1}}$) is not ending at a special vertex, then $\sigma_i \sigma_{i+1}\in  J $ (resp. $\overline{\sigma_i \sigma_{i+1}} \in J$),
 \item if $\sigma_i, \overline{\sigma_{i+1}}$   (resp. $\overline{\sigma_{i}}, \sigma_{i+1}$) are direct 
  homotopy letters and $\sigma_i$ (resp. $\overline{\sigma_{i}}$) is not ending at a special vertex, 
  then $\sigma_i \sigma_{i+1}$ is a string.
\end{itemize}

A nontrivial homotopy string $\sigma$ is \emph{symmetric} if $\sigma=\overline{\sigma}$ and \emph{asymmetric} otherwise.

\item A \emph{homotopy band} is a closed homotopy string $\sigma=\sigma_1 \dots \sigma_{n}$ with an equal number of direct and inverse homotopy letters  such that $\sigma$  is not  a proper power of some homotopy string $\sigma '$ and such that every power of $\sigma$ is a homotopy string.

 A nontrivial homotopy band $\sigma$ is \emph{symmetric} if $\sigma= \overline{\sigma}[m]$ for some $m$ and \emph{asymmetric} otherwise.

\end{enumerate}
\end{defn}

\begin{rem}
By definition, any symmetric band $\sigma$ is a word such that $$\sigma=\sigma=\sigma_1 \dots \sigma_k=a_1 \dots a_r\overline{a_r}\dots \overline{a_1}b_1 \dots b_s \overline{b_s} \dots \overline{b_1},$$ where $t(a_r)$ and $t(b_s)$ are special vertex, having the following picture:

\begin{tikzcd}
     \     & \bullet \arrow[r, no head, "b_{s-1}"] & \bullet & \dots & \bullet \arrow[r,no head, "b_1"] &\bullet \arrow[r,no head, "a_1"] & \bullet & \dots & \bullet \arrow[rd, "a_r"] &  \\
     \bullet \arrow[ru, "b_s"] \arrow[dr, "b_s"] & \ & \ & \ &\ & \ & \ & \ & \ &\bullet \\
     \ & \bullet \arrow[r,no head, "b_{s-1}"] & \bullet & \dots & \bullet  \arrow[r,no head, "b_1"] &\bullet \arrow[r,no head, "a_1"] & \bullet & \dots & \bullet \arrow[ru, "a_r"] & 
\end{tikzcd}

\end{rem}

\begin{defn}
A \emph{right (resp. left) infinite homotopy string} is a sequence $\sigma = \sigma_0 \sigma_1 \sigma_2 \ldots $ (resp. $\sigma= \dots  \sigma_{-2}\sigma_{-1} \sigma_0 $ ) such that for some $k$, all $\sigma_i$  (resp. $\sigma_{-i}$) are direct (resp. inverse) arrows, for $i >k$ and such that every finite subword of $\sigma$ is a homotopy string.  

An \emph{infinite homotopy string} is a sequence $\sigma= \dots \sigma_{-2}\sigma_{-1}\sigma_0\sigma_1 \sigma_2 \dots$ such that $\dots \sigma_{-2}\sigma_{-1}$ is a left infinite homotopy string and $\sigma_0\sigma_1\sigma_{2}$ is a right infinite homotopy string.
\end{defn}

\subsection{String and Band Complexes}

In order to define the complexes induced by homotopy strings and bands as introduced in \cite{BMM03}, we need to introduce a  grading on homotopy strings and bands. Our definition closely follows  \cite{OPS18}.

\begin{defn}
Let $\sigma=\sigma_1 \dots \sigma_r$ be a finite  homotopy string.

A \emph{grading} on $\sigma$ is a sequence of integers $\mu=(\mu_0, \dots, \mu_r)$ such that $$\mu_{i+1}=\begin{cases} \mu_i+1 & \textrm{if $\sigma_{i+1}$ is a direct homotopy letter;}\\ \mu_{i}-1 & \textrm{otherwise,}\end{cases}$$
for each $i\in \{1,\dots, r-1\}$. The pair $(\sigma, \mu)$ is a \emph{graded homotopy string.}

Moreover, if $\sigma$ is a homotopy band, the pair $(\sigma, \mu)$ is a \emph{graded homotopy band} if $(\sigma, \mu)$ is a graded homotopy string and $i$ is considered modulo $r$.
\end{defn}

In a similar way, we define a grading on (left, right) infinite homotopy strings.

Let $A$ be a skew-gentle algebra with admissble presentation $A^{sg} = KQ^{sg} / I^{sg}$. For a vertex $i \in Q_0^{sg}$,  we write $P_i$ for the projective indecomposable $A^{sg}$-module at vertex $i$.  

Following \cite{BMM03}, to each graded homotopy string or band $(\sigma, \mu)$ we associate a  complex of projective $A$-modules 
$P^\bullet_{(\sigma, \mu)}$ which is not necessarily indecomposable, but is a sum of at most two indecomposable  complexes. For this,  we freely view $A$-modules as modules over $A^{sg}$ and we define the following projective $A$-module for every $i \in Q_0$.

$$P(i)=\begin{cases}
P_{i^+}\oplus P_{i^-} & \textrm{if $i$ is a special vertex;}\\
P_i & \textrm{otherwise.}
\end{cases}$$

The definition of a complex associated to a symmetric homotopy band $\sigma$ relies on a set of matrices $\mathcal{M}$ with coefficients in $K$ where $M \in \mathcal{M}$ is such that $M=\begin{pmatrix} A & C \\ B & D\end{pmatrix}\in \textrm{Mat}(l+l', m+m')$ for some strictly positive integers $l, l',m,m'$.  We refer the reader to  \cite{BMM03}[Section 3.1] for the precise definition of $\mathcal{M}$.

\begin{defn} 
\begin{itemize}
    \item[(1)] Let $(\sigma,\mu)$ be a graded homotopy string with $\sigma=\sigma_1 \dots \sigma_r$. Then let  
    $$P^\bullet_{(\sigma, \mu)}= \dots \longrightarrow P^{-1} \longrightarrow P^0 \longrightarrow P^1 \longrightarrow \dots$$
   be the complex such that   $P^j=\bigoplus_{\substack{0\leq i \leq r \\ \mu_i=j}} P(i)$,  for all $j \in \mathbb Z$ and where the differentials are induced by the homotopy letters.
         
  If $\sigma$ is an asymmetric homotopy string,   we say that  a complex $P$ is \emph{an asymmetric string complex} if it is isomorphic to $P_{(\sigma, \mu)}$ in  $K^{b,-}(proj - A)$.
 
 If $\sigma$ is a symmetric homotopy string then  $P_{(\sigma, \mu)}$ decomposes into the direct sum of two indecomposables  complexes of projective $A$-modules. We will refer to the indecomposable summands as  $P_{(\sigma, \mu,0)}$ and $P_{(\sigma, \mu,1)}$. We call  a complex $P$ isomorphic to either $P_{(\sigma, \mu, 0)}^{\bullet}$ or $P_{(\sigma, \mu, 1)}^{\bullet}$, a \emph{dimidiate string complex.}

    \item[(2)] Let $(\sigma, \mu)$ be a graded homotopy band where  $\sigma=\sigma_1 \dots \sigma_r$ is an  asymmetric homotopy band. Let $\operatorname{ind}K\left[x\right]$  be the set of non trivial powers of irreducible polynomials over $K$ with leading coefficient equal to 1 and different from  $x$ and $x-1$. Then, for each $p(x)\in \operatorname{ind}k\left[x\right]$, let
     $$P^\bullet_{(\sigma, \mu), p(x)}= \dots \longrightarrow P^{-1} \longrightarrow P^0 \longrightarrow P^1 \longrightarrow \dots$$
    be the complex such that $P^j=\bigoplus_{\substack{0\leq i \leq r \\ \mu_i=j}} P(i) \otimes_K K^{\operatorname{deg} p(x)}$ in degree $j$.  
We call a complex  $P$  \emph{an asymmetric band complex} if it is isomorphic in $K^{b,-}(proj - A)$ to $P_{(\sigma, \mu), p(x)}$ where $\sigma$ is an asymmetric band.

\item[(3)] Let $(\sigma, \mu)$ be a graded homotopy band with $\sigma=a_1 \dots a_r\overline{a_r}\dots \overline{a_1}b_1 \dots b_s \overline{b_s} \dots \overline{b_1}$   symmetric and such that $e(a_r)$ and $e(b_s)$ are special vertices. Let $M=\begin{pmatrix} A & C \\ B & D\end{pmatrix}\in \mathcal M$. Then the complex 

 $$P^\bullet_{(\sigma, \mu)}= \dots \longrightarrow P^{-1} \longrightarrow P^0 \longrightarrow P^1 \longrightarrow \dots$$
is given by

 $$P^j=\bigoplus_{\substack{0\leq i \leq r \\ \mu_i=j}} Q(i)\oplus \bigoplus_{\substack{2r+1\leq i \leq 2r+s \\ \mu_i=j}} Q(i)$$
    
   for all $j\in \mathbb Z$, where $Q(i)$ is a projective $A$-module which depends on the size of the matrices $A,B,C,D$ in $M$  as follows:
    
    $$Q(i)=\begin{cases} P(i)\otimes_K K^{l} \oplus P(i)\otimes_K  K^{l'} & \textrm{if $i=\mu_0, \dots, \mu_{r-1}$}\\
P(i)\otimes_K K^{m} \oplus P(i)\otimes_K K^{m'} & \textrm{if $i=\mu_{2r+1}, \dots, \mu_{2r+s-1}$}\\
P_{i^+}\otimes_K K^{l} \oplus P_{i^-}\otimes_K K^{l'} & \textrm{if $i=\mu_{r}$}\\
P_{i^+}\otimes_K K^{m} \oplus P_{i^-}\otimes_K K^{m'} & \textrm{if $i=2r+s$}
\end{cases}$$

We say that a  complex $P$ is \emph{a dimidiate band complex} if it is isomorphic in  $K^{b,-}(proj - A)$ to $P_{(\sigma, \mu), M}$ where $\sigma$ is a symmetric band.
\end{itemize}
\end{defn}

We will not give the definitions of the differentials of the above complexes, since we do not need those in the geometric description of the indecomposable objects of the bounded derived category of a skew-gentle algebra. 

\begin{rem} In \cite{SV}  we give a correspondence of intersections of graded curves and homomorphisms in the bounded derived category of a skew-gentle algebra. \end{rem}

For a grading $\mu= (\mu_1, \ldots, \mu_r)$ on a homotopy string or band, define a grading shift $[m]$ as  $\mu[m] = (\mu_1 +m, \ldots, \mu_r +m)$ for $m \in \mathbb{Z}$. 
Observe that the the complex induced by $(\sigma, \mu[m])$ is $P^\bullet_{(\sigma, \mu[m])}=P^\bullet_{(\sigma, \mu)}[m]$.

\subsection{Main result on indecomposable objects of the derived category of  a skew-gentle algebra}

Before stating one of the main theorems of this paper, we recall that we identify unmarked boundary components and punctures in a surface $S$. We also recall that given an orbifold dissection $O=\dissection$ we define graded arcs and graded closed curves up to $O$-homotopy. 

\begin{thm}\label{thm:indecomposable objects}
Let $A$ be a skew-gentle algebra  with orbifold dissection $O=(S, M, \orbi, \gra^*)$. 
Then the homotopy strings and bands parametrizing the indecomposable objects in $D^b(A)$ are in bijection with  graded arcs and curves in $O$. More precisely, 

\begin{enumerate}
    \item the set of homotopy strings are in bijection with graded arcs $(\gamma,  f)$, where $\gamma$ is a finite arc in $O$ or an infinite arc whose infinite rays wrap around  unmarked boundary components  in the anti-clockwise orientation;
    \item the set of homotopy bands are in bijection with  graded primitive closed curves $(\gamma,  f)$ in $O$.\end{enumerate}
\end{thm}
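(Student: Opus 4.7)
The plan is to reduce the correspondence to the gentle case, handled in \cite{OPS18}, by exploiting the local replacement operation that passes between $\gra_\Lambda$ and $\gra_A$ (and dually between $\gra_\Lambda^*$ and $\gra_A^*$). Recall that edges of $\gra_A^*$ are in bijection with edges of $\gra_A$, which in turn correspond bijectively to the vertices of $Q$ (including the special ones). Given a graded homotopy string or band $(\sigma,\mu)$ with $\sigma=\sigma_1\dots\sigma_r$ and $\sigma_i$ built from arrows $\alpha_{i,1}\dots\alpha_{i,k_i}$, I will construct a curve $\gamma_\sigma$ in $O_A$ as the concatenation of segments, where the $i$-th segment crosses successively the edges of $\gra_A^*$ dual to the vertices $s(\alpha_{i,1}), t(\alpha_{i,1})=s(\alpha_{i,2}),\dots,t(\alpha_{i,k_i})$, with endpoints chosen in the marked points of $M$ sitting in the two polygons of $\gra_A^*$ that correspond to the terminal vertices $s(\sigma)$ and $t(\sigma)$ respectively. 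Exactly as in the gentle proof, the turning rule at a consecutive pair $\sigma_i,\sigma_{i+1}$ is dictated by the zigzag/relation structure, so that the curve stays in minimal position with $\gra_A^*$.

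Second, I will analyse how special vertices translate geometrically. By Theorem-Definition~\ref{Thm:dual graph}, a special vertex of $Q$ is incident in $\gra_A^*$ with a special edge ending at an orbifold point $o\in\orbi$, and by the construction of $A_+$ from \cite{BMM03}, a homotopy letter of $\sigma$ may end or start at a special vertex (through the removed relation $\alpha\beta$). In the geometric picture, the corresponding segment terminates on the special edge dual to that special vertex and continues through the orbifold point; using Definition~\ref{Def: orbi-homotopic} together with the $O$-homotopies displayed in Figures~\ref{fig:skein_relation1}--\ref{fig:orbi-equivalent-curves}, one checks that the two possible directions of spiralling around $o$ correspond precisely to the two possible ways of prolonging $\sigma$ through the special vertex, i.e.\ to the degeneracy of $\sigma_i\sigma_{i+1}$ being a string in $A_+$ rather than a relation. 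This is the same mechanism that produces a single curve from the pair of string complexes $P_{(\sigma,\mu,0)}$ and $P_{(\sigma,\mu,1)}$ in the symmetric case: the decomposition of the complex reflects the $\mathbb Z/2$-action, while geometrically the two summands share the same underlying graded arc in $O_A$.

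Next, I will match the graded data. The grading $\mu_{i+1}-\mu_i=\pm1$ according as $\sigma_{i+1}$ is direct or inverse is designed to match the combinatorial rule for $f$ at each polygon crossing: a direct letter crosses a polygon of $\gra_A^*$ so that the unique marked point lies on the left, while an inverse letter places it on the right. Thus $\mu$ transports verbatim to a grading $f$ on the sequence of crossed edges, and rotations $\sigma[m]$ correspond to cyclic relabellings, while shifts $\mu[n]$ correspond to a global shift of $f$. Primitivity of a homotopy band then matches exactly primitivity of the closed curve $\gamma_\sigma$, and closedness $f(x_1)=f(x_n)$ corresponds to the requirement that $\mu$ be well-defined mod $r$. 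For the inverse direction, given $(\gamma,f)$, reading off the sequence of edges crossed by $\gamma$ and interpreting each consecutive pair as a direct or inverse homotopy letter (applying a local replacement in reverse whenever $\gamma$ meets an orbifold point) reconstructs a canonical $(\sigma,\mu)$, and the two maps are mutually inverse up to inverting $\sigma$ and reversing $\gamma$. Infinite arcs wrapping clockwise around an unmarked boundary component correspond to the tails of (left/right/double) infinite homotopy strings, exactly because such boundaries arise from cycles in $Q$ with either no relations or full relations (locally gentle parts), and a ray spiralling infinitely often crosses the same sequence of edges dual to that cycle.

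The main obstacle will be the careful verification at the orbifold points: one must show that the $O$-homotopy moves of Definition~\ref{Def: orbi-homotopic} are exactly the geometric counterpart of the equivalence on homotopy strings induced by the identification of $\varepsilon^2$ with $\varepsilon$ in $I$, so that the two string complexes corresponding to a symmetric string, and the matrix-parametrised family corresponding to a symmetric band, collapse to a single geometric arc or curve. This requires analysing the neighbourhood of each orbifold point inside the associated degenerate polygon of $\gra_A^*$ (as produced in the proof of Lemma~\ref{lem:dual-dissection}) and checking that the local picture of Figure~\ref{fig:local-replacement-dual} identifies the two ways of passing through $o$ with the two idempotent lifts $i^\pm$ of the special vertex $i$. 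Once this local analysis is pinned down, the global bijection in both (1) and (2) follows by concatenation, and symmetry of $\sigma$ translates into invariance of $\gamma_\sigma$ under the natural involution on curves passing through orbifold points.
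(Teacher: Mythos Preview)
Your concrete construction---building $\gamma_\sigma$ as a concatenation of segments in the polygons of $\gra_A^*$ determined by consecutive homotopy letters, and transporting $\mu$ to $f$ via the left/right rule---is exactly what the paper does in Definition~\ref{def:homotopy letter} and Lemmas~\ref{lem:from_arcs_to_strings} and~\ref{lem:from_strings_to_arcs}. Despite your framing, you never actually reduce to the gentle bijection for $\Lambda$ on $S_\Lambda$: you work directly in $O_A$, just as the paper does. Indeed a genuine reduction would be awkward, since homotopy strings for $A$ are defined relative to $A_+=KQ/J$, which has strictly fewer relations than $\Lambda$, so they are not homotopy strings for $\Lambda$ and do not correspond to arcs on $S_\Lambda$ under \cite{OPS18}.

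Where your proposal goes wrong is in the analysis at an orbifold point. You write that ``the two possible directions of spiralling around $o$ correspond precisely to the two possible ways of prolonging $\sigma$ through the special vertex,'' and later that the $O$-homotopy moves are the geometric counterpart of the relation $\varepsilon^2=\varepsilon$. Both claims conflate distinct layers. By Definition~\ref{Def: orbi-homotopic} the two spiralling directions and the straight pass through $o$ are \emph{all} $O$-homotopic, so on the curve side there is a single class, not two. On the combinatorial side, homotopy strings are words in the arrows of $Q$ (no $\varepsilon$ appears); what is special at $i\in Sp$ is that the usual constraints on $\sigma_k\sigma_{k+1}$ are dropped, which is what allows both the ``pass-through'' $\alpha\beta$ (via the removed relation in $J$) and the ``bounce-back'' $\alpha\,\overline{\alpha}$ that produces symmetric strings. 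These are different homotopy strings and correspond to genuinely different arcs in $O_A$---the curve either continues across the degenerate polygon or returns the way it came after touching $o$---not to two representatives of the same $O$-homotopy class. The splitting into $i^\pm$ and the pair $P_{(\sigma,\mu,0)},P_{(\sigma,\mu,1)}$ lives one level up, at the level of complexes, and plays no role in the bijection asserted by the theorem, which is purely between homotopy strings/bands and graded curves. Once you disentangle these, the ``main obstacle'' you flag becomes the routine local check carried out in the paper around Figures~\ref{fig:segments} and~\ref{fig:special_segment}.
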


The proof is very similar to that of the corresponding result for gentle algebras in \cite{OPS18} with suitable adjustments to be made for special loops in the algebra on the one side and polygons containing orbifold points in the surface on the other. For the convenience of the reader, we give the whole proof in detail. 

The following definition gives the construction of a homotopy  word induced by the interesection of an arc or curve in the surface with the edges of the dual graph.

\begin{defn}\label{def:homotopy letter}
Let $A$ be a skew-gentle algebra with quiver $Q_A=Q$ and  $\gamma$ be an  arc or a closed curve in its orbifold dissection $(S,M, \orbi,  \gra^*)$. We set  $\left(x_i\right)$ to be the ordered \emph{multiset of edges} of  $\gra^*$ crossed by $\gamma$ following its trajectory. Furthermore, if  $\gamma$ crosses $\gra^*$ at least twice. Let $\left[x_{i}, x_{i+1}\right]$ be the oriented segment going from $x_i$ to $x_{i+1}$, recall that both edges are in a (degenerate) polygon $P$ which contains exactly one marked point $m \in M$.

We define the \emph{homotopy letter} $\sigma(x_i)$ associated to the oriented segment $\left[x_i, x_{i+1}\right]$ as follows:

\begin{enumerate}
    \item If the marked point $m$ is on the left of $\left[x_i, x_{i+1}\right]$, then let $w_1, \dots, w_r$ be the edges between $x_i=w_1$ and $x_{i+1}=w_r$ in the clockwise order. By Theorem \ref{Thm:dual graph}, these correspond to vertices of the quiver $Q$ of $A$ which are joined by arrows $\alpha_1, \dots, \alpha_{r-1}$. Then define $\sigma(x_i):=(\alpha_1 \dots \alpha_{r-1})$.
    \item If the marked point $m$ is on the right of $\left[x_i, x_{i+1}\right]$, then let $w_1, \dots, w_r$ be the edges between $x_{i+1}=w_1$ and $x_i=w_r$ in the clockwise order. By Theorem \ref{Thm:dual graph}, these correspond to vertices of the quiver $Q$ of $A$ which are joined by arrows $\alpha_1, \dots, \alpha_{r-1}$. Then define $\sigma(x_i):=\overline{(\alpha_1 \dots \alpha_{r-1})}$.
\end{enumerate}
\end{defn}

If a graded arc $(\gamma, f)$ crosses $\gra^*$ exactly once, namely at the edge $x$, then we set $\sigma(\gamma)$ to be the trivial string $e_x$, and $\mu(f)=(f(x))$ the grading on $\sigma(\gamma)$. 

Observe that by Theorem \ref{Thm:dual graph}, $\sigma(x_i)$ is indeed a homotopy letter.

\begin{lem}\label{lem:from_arcs_to_strings}
Let $(S, M, \orbi, \gra^*)$ be the orbifold dissection of a skew-gentle algebra $A$ given by the dual $\gra^*$ of the generalised ribbon graph of $A$ and let $(\gamma, f)$ be an arc or a graded primitive closed curve on $(S,M,\orbi)$ with  $\left(x_i \right)$  the ordered multiset of edges of the dual graph $\gra^*$ crossed by $\gamma$ following its trajectory.
\begin{enumerate}

    \item If $(\gamma, f)$ is a finite graded arc which crosses  the edges of $\gra^*$ exactly $r$ times and at least twice, then 
    $\sigma(\gamma)= \prod_{i=1}^{r-1}\sigma(x_i)$
    is a homotopy string and $\mu(f) = (f(x_1),\dots,f(x_r))$ is a grading on $\sigma(\gamma)$.
    
    \item  If $(\gamma, f)$ is an infinite graded arc, then $\sigma(\gamma)=\prod\sigma(x_i)$ is a infinite homotopy string and $\mu(f) = (f(x_i))$ is a grading on $\sigma(\gamma)$.

    \item If $(\gamma, f)$ is a primitive graded closed curve and if   $x_1, x_2, \dots, x_r$  are the distinct edges of $\gra^*$ crossed by $\gamma$ (in that order), then $\sigma(\gamma)=\prod_{i=0}^r\sigma(x_i)$  is a homotopy band and $\mu(f) = (f(x_1),\dots,f(x_{r-1}))$ is a grading on $\sigma(\gamma)$.
\end{enumerate}
\end{lem}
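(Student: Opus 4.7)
The proof proceeds by unpacking Definition \ref{def:homotopy letter} and checking the homotopy string/band axioms directly from the local geometry of the dual dissection. The key observation is that any two consecutive edges $x_i, x_{i+1}$ crossed by $\gamma$ lie on the boundary of a single (possibly degenerate) polygon of the dissection induced by $\gra^*$; by Lemma \ref{lem:dual-dissection} together with Remark \ref{rem:GeometricRibbonGraph}, the edges of such a polygon are enumerated in cyclic order by an $Sp$-maximal path around the vertex of $\gra_A$ opposite this polygon. Hence $\sigma(x_i)$, being the subpath (or its formal inverse) between $x_i$ and $x_{i+1}$, is automatically a direct (resp.\ inverse) homotopy letter in the sense of Section~\ref{subsec:homotopystrings}, and $t(\sigma(x_i))=s(\sigma(x_{i+1}))$ since both equal the vertex of $Q$ corresponding to the edge $x_{i+1}$.

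The heart of (1) is the homotopy string condition on each pair $\sigma(x_i), \sigma(x_{i+1})$, which I would verify by cases on their orientations. If both are direct (or both inverse), then $\gamma$ bends at $x_{i+1}$ between two adjacent polygons which share a unique vertex of $\gra_A$; the cyclic ordering at that vertex, together with the quiver construction in Theorem~\ref{Thm:skew-gentle orbifold}, forces the composition of the last arrow of $\sigma(x_i)$ with the first arrow of $\sigma(x_{i+1})$ (or its inverse) to be a generating relation of $J$. If the two letters have opposite orientation, then the concatenation turns back on itself at $x_{i+1}$ and is a string in $\Lambda$ by the same local argument, matching the second bullet of the homotopy string definition. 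The grading claim is then immediate: $\sigma(x_i)$ is direct precisely when the marked point of the polygon containing $[x_i,x_{i+1}]$ lies on its left, which by definition of a graded arc is exactly the condition $f(x_{i+1})=f(x_i)+1$; so $\mu(f)$ satisfies the required grading recursion.

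For (2), the verifications above are entirely local and so extend to infinite arcs; a wrapping tail of $\gamma$ around an unmarked boundary translates, via the local picture of the dissection near that boundary as a disk whose polygons are glued around the boundary, into a uniformly direct (resp.\ inverse) tail of homotopy letters, as required of a left/right infinite homotopy string. For (3), closedness of $\gamma$ cyclically identifies $x_1$ with $x_n$, making $\sigma(\gamma)$ a closed homotopy walk; the cyclic grading condition $f(x_1)=f(x_n)$ forces the number of $+1$ steps in $\mu(f)$ to equal the number of $-1$ steps, i.e.\ the number of direct homotopy letters to equal the number of inverse ones. Primitivity of $\gamma$ passes directly to $\sigma(\gamma)$ not being a proper power, and every rotation remains a homotopy string because the string axioms were verified locally at each crossing.

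The main technical obstacle will be the bookkeeping at the special edges of $\gra^*$. When $x_{i+1}$ is a special edge, the adjacent polygon is degenerate and contains an orbifold point, and the composition of arrows at the associated special vertex of $Q$ involves the special loop $\varepsilon$ with $\varepsilon^2-\varepsilon \in I$; one must check, using Lemma~\ref{lem:special-edges} and the local replacement picture, that the exceptional clauses in the definition of a homotopy string (the ones concerning letters ending at a special vertex) are exactly what the geometry produces in this case. Once this local special-edge dictionary is tabulated, every remaining step in the proof is a direct unraveling of the definitions.
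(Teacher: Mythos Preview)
Your proposal is correct and follows essentially the same route as the paper: both arguments verify the homotopy string/band axioms directly from the local geometry of the dual dissection, invoking the quiver construction (Theorem~\ref{Thm:skew-gentle orbifold}/Theorem~\ref{Thm:dual graph}) to see that the composition at each crossing either lies in $J$ or ends at a special vertex, and then reading off the grading and the equal count of direct/inverse letters from the definition of $f$. The paper is terser---it does not split into the direct/inverse orientation cases you describe and dispatches the special-edge situation in a single clause---but the substance is the same; one small slip in your write-up is the phrase ``two adjacent polygons which share a unique vertex of $\gra_A$'': adjacent polygons of $\gra^*$ share the edge $x_{i+1}$ (equivalently, a vertex of $Q$), not a vertex of $\gra_A$.
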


\begin{proof}
 By construction, $\sigma(\gamma_i)$ is a homotopy letter, for all $i$. As before, let $\Lambda= KQ/I$ be the associated gentle algebra obtained from A be deleting all special loops, $B$ be a minimal set of relations of $\Lambda$ and $A+ = KQ/J$, where $J = \langle B \setminus \{\alpha \beta \mid t(\alpha) \in Sp \}\rangle$. By Theorem \ref{Thm:dual graph}, either the composition or  of the last arrow of $\sigma(\gamma_i)$ and the first arrow  of $\sigma(\gamma_{i+1})$  (or the composition of their inverses) are in $J$  or the end of $\sigma(\gamma_i)$ is a special vertex. Then  $(2)$ directly follows from the definition of homotopy strings in Section~\ref{subsec:homotopystrings}.

Now let $(\gamma, f)$ be an infinite graded arc. By Lemma~\ref{lem:dual-dissection}, $\gamma$ wraps around a puncture $p$ with at least one incident edge and by the above, every finite homotopy sub-walk of $\sigma(\gamma)$ is a homotopy string.
Denote by $x_k, x_{k+1}, \dots, x_r$ the set of edges of $\gra^*$ that $\gamma$ crosses when $\gamma$ does a complete turn around $p$. By Theorem \ref{Thm:dual graph}, their associated homotopy letters $\sigma(x_k), \dots, \sigma(x_r)$ are homotopy letters of length one, and $\sigma(x_k) \dots \sigma(x_r)$ or its inverse  is an oriented cycle.
Furthermore, we have that $\sigma(x_r) \sigma(x_{r+1}) \in J$ with $\sigma(x_{r+1}) = \sigma(x_{k})$ and more generally, $\sigma(x_{r+i}) = \sigma(x_{k+i-1})$ for all $i \geq 1$. Thus $\sigma(\gamma)$ is eventually periodic and an infinite homotopy string.

In cases (1) and (2),  directly follows from the definition of the grading $f$ on $\gamma$  that $\mu(f)$ is a grading on the associated homotopy string.

To prove $(3)$, assume that $(\gamma, f)$ is a graded primitive closed curve. Let $\sigma(\gamma)=\prod_{i=0}^r\sigma(\gamma_i)$. It is enough to observe that $s(\sigma(\gamma_1))=t(\sigma_{\gamma_r})$ to ensure that any rotation of $\sigma(\gamma)$ is a homotopy string. By definition, the existence of the  grading $f$ implies that there is the same number of inverse and direct homotopy letters in $\sigma(\gamma)$, which implies that $\sigma(\gamma)$ is a homotopy band.
\end{proof}

\begin{lem}\label{lem:from_strings_to_arcs}
Let $O=(S,M, \orbi, \gra^*)$ be the orbifold dissection of a skew-gentle algebra $A$ given by the dual graph generalised ribbon graph $\gra^*$ of $A$.
\begin{enumerate}
 \item Let  $(\sigma, \mu)$ be a finite (resp. infinite) graded homotopy string. Then there exists a unique finite (resp. infinite) graded arc $(\gamma, f)$ on $O$ such that $(\sigma(\gamma), \mu(f))=(\sigma, \mu)$.

\item Let $(\sigma,\mu)$ be a graded homotopy band. Then there exists a unique graded closed curve $(\gamma, f)$ on $O$ (up to $O$-homotopy) such that $(\sigma(\gamma), \mu(f))=(\sigma, \mu)$.
\end{enumerate}
\end{lem}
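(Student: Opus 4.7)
The plan is to invert the construction of Lemma~\ref{lem:from_arcs_to_strings} by realising each homotopy letter of $\sigma$ as an oriented segment inside a single (degenerate) polygon of the dissection of $O$ by $\gra^*$, concatenating these segments, checking that the grading matches, and finally verifying uniqueness.

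For the construction, given a direct homotopy letter $\sigma_i = \alpha_1 \cdots \alpha_{r-1}$ in $Q$, the vertices $s(\alpha_1), t(\alpha_1), \ldots, t(\alpha_{r-1})$ correspond, by Theorem~\ref{Thm:dual graph} and Lemma~\ref{lem:dual-dissection}, to edges $w_1, \ldots, w_r$ of $\gra^*$ which together bound a common (degenerate) polygon $P$ containing exactly one marked point $m \in M$. I would draw an oriented segment in the interior of $P$ entering through $w_1$, exiting through $w_r$ and keeping $m$ on its left; if $\sigma_i$ is inverse, I would put $m$ on the right. The condition defining a homotopy string, namely that $\sigma_i \sigma_{i+1}$ (or its inverse) lies in $J$, or alternatively that $\sigma_i$ ends at a special vertex, will force $t(\sigma_i) = s(\sigma_{i+1})$ as edges of $\gra^*$, so the exit edge for $\sigma_i$ coincides with the entry edge for $\sigma_{i+1}$. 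Gluing the endpoints on this shared edge produces a continuous curve $\gamma$, and a direct check will show that setting $f(x_i) := \mu_i$ defines a grading on $\gamma$ whose associated graded string is exactly $(\sigma, \mu)$.

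For a finite homotopy string this yields a finite arc between two marked points of $M$. For an infinite string, the proof of Lemma~\ref{lem:from_arcs_to_strings} already establishes eventual periodicity of $\sigma$ on each infinite side, and I would argue that the period corresponds to an oriented cycle of $Q$ whose consecutive pairs all lie in $J$, which in turn corresponds to a cycle of edges of $\gra^*$ spiralling around an unmarked boundary component; the concatenated segments will then wrap infinitely around the associated puncture in the anti-clockwise sense, matching Definition~\ref{Def:arcs}(3). For a graded homotopy band I would perform the same gluing cyclically; the defining condition that the grading be cyclic on $\sigma$ translates exactly into the closure condition $f(x_1) = f(x_n)$ of a graded closed curve, and the primitivity of $\sigma$ will match primitivity of $\gamma$.

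Uniqueness will follow from the observation that any curve in $O$ transverse to $\gra^*$ is determined up to $O$-homotopy by the ordered multiset of edges of $\gra^*$ it crosses together with the left/right position of the unique marked point in each traversed polygon, and this combinatorial datum is precisely what $(\sigma, \mu)$ records; the skein-type moves around orbifold points in Definition~\ref{Def: orbi-homotopic} preserve this data. The main obstacle, on which I would focus attention, is the treatment of special edges: when $\sigma_i$ ends at a special vertex, consecutive segments glue across a special edge of $\gra^*$ incident to an orbifold point, and I must verify, using Lemma~\ref{lem:special-edges} and Lemma~\ref{lem:dual-dissection}, that the segment extends correctly inside the unique degenerate polygon containing that edge and that the resulting $O$-homotopy class is unambiguous. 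Once this case is settled, both assertions (1) and (2) will follow at once.
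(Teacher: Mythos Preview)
Your approach is essentially identical to the paper's: both build $\gamma$ polygon-by-polygon by turning each homotopy letter $\sigma_i$ into an oriented segment in the unique (degenerate) polygon of $\gra^*$ containing the edges $x_i=s(\sigma_i)$ and $x_{i+1}=t(\sigma_i)$, glue consecutive segments along shared edges, and deduce uniqueness from the fact that a curve is determined up to $O$-homotopy by its ordered multiset of crossings with $\gra^*$. Your identification of special edges as the delicate case, and your handling of the infinite and band cases, also mirror the paper.

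One small point you glossed over that the paper makes explicit: after concatenating $\gamma_1*\cdots*\gamma_n$ you obtain a curve running from the midpoint of $x_1$ to the midpoint of $x_{n+1}$, not yet an arc in the sense of Definition~\ref{Def:arcs}. The paper explicitly appends segments $\gamma_0$ and $\gamma_{n+1}$ inside the adjacent polygons $P_0$ and $P_{n+1}$ connecting these midpoints to the unique marked points $m_0,m_{n+1}\in M$ on their boundary segments, with a separate case analysis when $x_1$ or $x_{n+1}$ is special (in which case $P_0=P_1$, respectively $P_n=P_{n+1}$). You assert this produces ``a finite arc between two marked points of $M$'' without supplying these end-segments; adding them is routine, but it is where the special-vertex case analysis actually bites.
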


\begin{proof}

Let $\Lambda= KQ/I$ be the associated gentle algebra obtained from A be deleting all special loops, $B$ be a minimal set of relations of $\Lambda$ and $A+ = KQ/J$, where $J = \langle B \setminus \{\alpha \beta \mid t(\alpha) \in Sp \}\rangle$.

Let $(\sigma, \mu)$ be a finite graded homotopy string. By definition $\sigma$ is a sequence $\sigma_1 \dots \sigma_n$ of homotopy letters $\sigma_i$ where $s(\sigma_{i+1})=t(\sigma_i)$ and such that either $\sigma_i\sigma_{i+1}\in J$  or $\overline{\sigma_i\sigma_{i+1}}\in J$ or $s(\sigma_{i+1})=t(\sigma_i)$ is a special vertex.

It is enough to prove that for each homotopy letter $\sigma_i$ there is a (unique) oriented segment $\gamma_i$  such that the topological concatenation of those segments induces a graded arc $\gamma$ on $O$ and that two distinct homotopy strings give rise to two graded arcs which are not $O$-homotopic.

Denote by $x_i$ and $x_{i+1}$ the edges of $\gra^*$ corresponding to  $s(\sigma_i)$ and $t(\sigma_i)$ respectively. By construction of  $\gra^*$, there is exactly one (degenerate) polygon $P_i$ in $O$ such that  $x_i$ and $x_{i+1}$ are edges of $P_i$.

If $P_i$ is not degenerate or $x_i$ and $x_{i+1}$ are not special edges, then up to homotopy there is a unique oriented segment $\gamma_i$ in the interior of $P_i$ starting at  the mid-point of $x_i$ and ending at the mid-point of $x_{i+1}$ such that $\gamma_i$ has no self-crossing and does not cross any other edge of $\gra^*$.

If $x_i$ is not a special edge and $x_{i+1}$ is a special edge (or $x_i$ is a special edge and $x_{i+1}$ is not a special edge), then up to homotopy there is a unique oriented segment $\gamma_i$ in the interior of $P_i$ starting at the mid-point of (resp. the orbifold point of the edge) $x_i$ and ending at the orbifold point of the edge (resp. the mid-point of) $x_{i+1}$ such that $\gamma_i$ has no self-crossing and does not cross any other edge of $\gra^*$, see figure \ref{fig:segments}.

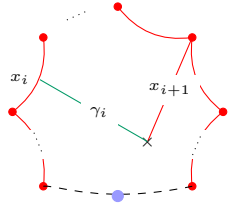
\begin{figure}[ht!]
    \centering
        \begin{tikzpicture}[scale=1.4]
        \filldraw[red](0:1cm) circle (1pt) coordinate (a);
		\filldraw[red](45:1cm) circle (1pt) coordinate (b);
		\filldraw[red](90:1cm) circle (1pt) coordinate (c);
		\filldraw[red](135:1cm) circle (1pt) coordinate (d);
		\filldraw[red](180:1cm) circle (1pt) coordinate (e);
		\filldraw[red](225:1cm) circle (1pt) coordinate (f);
		\filldraw[red](315:1cm) circle (1pt) coordinate (h);	
\draw[red] (a) to[bend left] (b);
\draw[red] (b) to[bend left]  (c);
\draw[red] (d) to[bend left] node[left] {\textcolor{black}{\tiny$x_i$}} (e);
\draw[red] (e) to[bend left] node[midway, ,fill=white] {\textcolor{white}{a}} (f);
\draw[red] (a) to[bend right] node[midway, ,fill=white] {\textcolor{white}{a}}  (h);
\draw[red] (b) to node[midway, ,fill=white] {\textcolor{black}{\tiny$x_{i+1}$}} (315:0.4cm);
\node at (315:0.4cm) {\textcolor{black}{\tiny$\times$}} ;
\draw[black, dashed] (f).. controls (230:0.9cm) and (260:0.9cm) ..(h);
\draw[blue!40!green] node[left, fill=white] {\textcolor{black}{\tiny$\gamma_i$}} (157.5:0.8cm) to (315:0.4cm);
\draw[dotted] (108:1cm) to (120:1cm);
\draw[dotted] (194:0.82cm) to (210:0.82cm);
\draw[dotted] (330:0.82cm) to (344:0.82cm);
\filldraw[blue!40!white](270:0.8cm) circle (1.5pt) coordinate (g);
\end{tikzpicture}
    \caption{The arc $\gamma$ from $x_i$ to $x_{i+1}$}
    \label{fig:segments}
\end{figure}

It is clear that the ending point of the oriented segment $\gamma_i$ is the starting point of $\gamma_{i+1}$ and that the concatenation $\gamma_{1}*\dots *\gamma_{n}$ is an oriented segment from $x_1=s(\sigma_1\dots \sigma_n)$ to $x_{n+1}=t(\sigma_1 \dots \sigma_n)$.

Now, let $P_0$ (resp. $P_{n+1}$)  be the (degenerate) polygon which shares $x_1$ (resp. $x_{n+1}$) with $P_1$ (resp. $P_n$) and let $m_0$ (resp. $m_{n+1}$) be the marked point in the boundary segment of $P_0$ (resp. $P_{x_{n+1}}$). Observe that if $x_1$ (resp. $x_n$) is a special edge, then $P_0$ and $P_1$ (resp. $P_n$ and $P_{n+1}$) coincide.

Suppose that $x_1$ (resp. $x_n$) is not a special edge. Then there exists a unique oriented segment from the marked point $m_0$ (resp. the point $x_{n+1}$) to middle point of $x_1$ (resp. marked point $m_{n+1}$), which lies in $P_0$ (resp. $P_{n+1}$), without self intersection and it is not crossing other edge of $G^*$.

If $x_1$ (resp. $x_n$) is a special edge, then there is a unique oriented segment $\gamma_0$ (resp. $\gamma_{n+1}$) from $m_0$ (resp. the middle point of $x_{n+1}$) to the orbifold point of $x_1$ (resp. $m_{n+1}$) such that $\gamma_0*\gamma_1$ (resp. $\gamma_{n}*\gamma_{n+1}$)lies in $P_1=P_0$ (resp. $P_{n}=P_{n+1}$) and is not crossing any other edge of $\gra^*$, see Figure \ref{fig:special_segment}.

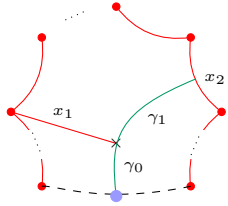
\begin{figure}[ht!]
    \centering
        \begin{tikzpicture}[scale=1.4]
        \filldraw[red](0:1cm) circle (1pt) coordinate (a);
		\filldraw[red](45:1cm) circle (1pt) coordinate (b);
		\filldraw[red](90:1cm) circle (1pt) coordinate (c);
		\filldraw[red](135:1cm) circle (1pt) coordinate (d);
		\filldraw[red](180:1cm) circle (1pt) coordinate (e);
		\filldraw[red](225:1cm) circle (1pt) coordinate (f);
		\filldraw[red](315:1cm) circle (1pt) coordinate (h);	
\draw[red] (a) to[bend left] node[right] {\textcolor{black}{\tiny$x_2$}} (b);
\draw[red] (b) to[bend left]   (c);
\draw[red] (d) to[bend left] (e);
\draw[red] (e) to[bend left] node[midway, ,fill=white] {\textcolor{white}{a}} (f);
\draw[red] (a) to[bend right] node[midway, ,fill=white] {\textcolor{white}{a}}  (h);
\draw[red] (e) to node[above] {\textcolor{black}{\tiny$x_{1}$}} (270:0.3cm);
\node at (270:0.3cm) {\textcolor{black}{\tiny$\times$}} ;
\draw[dotted] (108:1cm) to (120:1cm);
\draw[dotted] (194:0.82cm) to (210:0.82cm);
\draw[dotted] (330:0.82cm) to (344:0.82cm);
\draw[black, dashed] (f).. controls (230:0.9cm) and (260:0.9cm) ..(h);
\draw[blue!40!green] (270:0.8cm) ..controls (255:0.25cm) and (0,0) .. (22.5:0.82cm);
\filldraw[blue!40!white](270:0.8cm) circle (1.5pt) coordinate (g);
\node at (288:0.55cm) {\tiny$\gamma_0$};
\node at (350:0.4cm) {\tiny$\gamma_1$};
\end{tikzpicture}
    \caption{Oriented segment $\gamma_0*\gamma_1$}
    \label{fig:special_segment}
\end{figure}
 
Then $\gamma=\gamma_0*\gamma_1* \dots * \gamma_{n}*\gamma_{n+1}$ is a finite arc and by construction $\sigma(\gamma)=\sigma$. Moreover, the grading $\mu$ on $\sigma$ induces a natural grading $f$ on $\gamma$.

To finish the proof, suppose that $\gamma'$ is a finite arcs such that $\sigma(\gamma')=\sigma=\sigma(\gamma)$. Without loss of generality, suppose that $\gamma'$ is in minimal position, then $\sigma(\gamma')$ is already a reduced homotopy string. We claim that $\gamma$ and $\gamma'$ are $O$-homotopic. Any curve is completely determined by the ordered multiset of edges of $\gra^*$ that it crosses and $\sigma(\gamma')=\sigma=\sigma(\gamma)$, thus $\gamma$ and $\gamma'$ have the same multiset of edges of crossings with $\gra^*$ and $\gamma$ and $\gamma'$ are  $O$-homotopic.

The proof for infinite homotopy strings and bands is similar to the above.

\end{proof}

\begin{proof}[Proof of Theorem~\ref{thm:indecomposable objects}]
By \cite[Theorem 3]{BMM03}, the indecomposable objects of the derived category $\mathcal D^b(A)$ are completely described by the graded homotopy strings and bands. The result then follows from Lemma~\ref{lem:from_arcs_to_strings} and Lemma~\ref{lem:from_strings_to_arcs}.
\end{proof}

\section{Applications}

\subsection{Singularity category of a skew-gentle algebra}
The \emph{stable derived category or singularity category} $\SC(A)$ of an algebra $A$ is defined as the Verdier quotient of the bounded derived category with respect to the perfect derived category.

In \cite[Theorem 2.5]{Kal15}, the singularity category of a gentle algebra was explicitly described as a finite product of triangulated orbit categories which turn out to be $n$-cluster categories of type $\mathbb A_1$, as follows. Let $\Lambda=KQ/I$ be a gentle algebra. A cycle $\alpha_1 \dots \alpha_n$ of positive length on $Q$ is \emph{saturated} if each of the length-2 paths $\alpha_{1}\alpha_{2},\ldots,\alpha_{n-1}\alpha_n,\alpha_n\alpha_1$ belongs to $I$.  Let $\mathcal C(\Lambda)$ be the set of cyclical permutation equivalence classes of saturated cycles without repeated arrows. For $n>2$ denote by $\mathcal D^b(K-mod)/\left[n\right]$ the triangulated orbit category as defined in \cite{K05}. Then \cite[Theorem 2.5]{Kal15} shows that $\SC(\Lambda)$ and $\prod_{c\in \mathcal C(\Lambda)} \mathcal D^b(K-mod)/\left[n_c\right]$ are equivalent as triangulated categories, where $n_c$ denotes the length of any cycle in  the equivalence class $c\in \mathcal C(\Lambda)$.

By construction of the dual ribbon graph $\gra_\Lambda$ of a gentle algebra $\Lambda$, it is easy to see that there is a bijection between $\mathcal C(\Lambda)$ and the polygons in $\gra_\Lambda$ with no boundary edges. We will call such polygons \emph{interior polygons} and we denote by $\mathcal{P}^\circ_\Lambda$ the set of interior polygons.  Furthermore, for $P \in \mathcal{P}^\circ_\Lambda$ denote by $n_P$ the number of edges of $P$.  A concrete description of $\SC(\Lambda)$ is then given by 
$$\SC(\Lambda) \simeq \prod_{P \in \mathcal{P}^\circ_\Lambda } \mathcal D^b(K-mod)/\left[n_P\right].$$

The geometric description of the singularity category of a skew-gentle algebra,  follows from the fact that, by \cite[Theorem 3.5]{CL15}, a skew-gentle algebra $A$ and its underlying gentle algebra $\Lambda$ have equivalent singularity categories, and that the generalised ribbon graph of a skew-gentle algebra and that of a gentle algebra have the same number of interior polygons and corresponding interior polygons have the same number of edges. More precisely, we have the following.

\begin{thm}\label{thm:sigularity category}
Let $A$ be a skew-gentle algebra and $(S, M, \orbi, \gra)$ be the orbifold dissection induced by  the generalised ribbon graph $\gra$ of $A$. Then  
$$\SC(A) \simeq \prod_{P \in \mathcal{P}^\circ_A } \mathcal D^b(K-mod)/\left[n_P\right]$$
where $\mathcal{P}^\circ_A$ is the set of interior polygons of $\gra$ and  $n_P$ is the number of edges of $P$, for $P \in \mathcal{\mathcal{P}^\circ_A }$.
\end{thm}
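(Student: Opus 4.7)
The plan is to combine a reduction theorem of Chen--Lu with Kalck's geometric description for gentle algebras, and then to transport the result across local replacement. First I would invoke \cite[Theorem 3.5]{CL15}, which gives a triangulated equivalence $\SC(A) \simeq \SC(\Lambda)$, where $\Lambda = KQ/I$ is the gentle algebra obtained from $A$ by deleting all special loops. Applying then \cite[Theorem 2.5]{Kal15} to $\Lambda$ yields
$$\SC(\Lambda) \simeq \prod_{c \in \mathcal{C}(\Lambda)} \DC(K-mod)/[n_c],$$
where $\mathcal{C}(\Lambda)$ is the set of cyclical permutation classes of saturated cycles without repeated arrows in $Q$ and $n_c$ is the length of a representative.

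Next, I would translate the indexing set $\mathcal{C}(\Lambda)$ into polygons. From the construction of $\gra_\Lambda$, a saturated cycle traces out precisely a type-(b) polygon in the sense of Proposition~\ref{Prop:degenerate-polygons} (no boundary segment, one interior boundary component), and every interior polygon arises this way. This gives a length-preserving bijection $\mathcal{C}(\Lambda) \longleftrightarrow \mathcal{P}^\circ_\Lambda$. The final step is to identify $\mathcal{P}^\circ_\Lambda$ with $\mathcal{P}^\circ_A$: by Lemma~\ref{lem:special-edges} every special vertex of $A$ corresponds to an edge of a type-(a) digon in $\gra_\Lambda$, and local replacement operates only within such a digon and its adjacent polygon, collapsing the boundary segment and inserting an orbifold point inside the adjacent polygon. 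In particular, local replacement leaves type-(b) polygons untouched, so $\mathcal{P}^\circ_\Lambda$ and $\mathcal{P}^\circ_A$ are canonically in bijection with matching edge counts.

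The only substantive point to verify is this last bijection, i.e., that no interior polygon is ever modified by local replacement. This reduces immediately to the observation in Lemma~\ref{lem:special-edges} that special edges live only inside boundary digons and therefore cannot appear in any type-(b) polygon. Composing the three equivalences then produces the claimed description of $\SC(A)$.
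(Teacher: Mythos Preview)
Your proposal is correct and follows exactly the paper's approach: reduce to the gentle algebra $\Lambda$ via \cite[Theorem 3.5]{CL15}, apply Kalck's description \cite[Theorem 2.5]{Kal15}, and then observe that local replacement preserves the set of interior polygons together with their edge counts. One small wording issue: a special edge $g$ is shared by the boundary digon $P'$ and the adjacent polygon $P$, and $P$ can itself be of type (b), so it is not literally true that special edges ``cannot appear in any type-(b) polygon'' or that interior polygons are ``untouched''---rather, the polygon being \emph{removed} is always the boundary digon $P'$, while $P$ (possibly interior) becomes degenerate but keeps both its type and its number of edges, which is all you need for the bijection.
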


\subsection{Gorenstein dimension of skew-gentle algebras}

Recall that a finite dimensional algebra $A$ is \emph{$d$-Gorenstein} if it has a finite injective dimension $d$ as a left and right $A$-module. Both gentle and skew-gentle algebras are Gorenstein \cite{GR05}.

The following result shows that the Gorenstein dimension of a skew-gentle algebra can be read from its orbifold dissection. For this we recall that a skew-gentle algebra gives rise to an orbifold dissection into generalised polygons which either have no boundary edges or which have exactly one boundary edge. We refer to the latter as a \emph{boundary polygon}. 

\begin{thm}\label{thm:gorenstein dimension}
Let $A$ be a skew-gentle algebra and $(S, M, \orbi, \gra)$ be the orbifold dissection given by the generalised ribbon graph $\gra$ of $A$. Then the Gorenstein dimension of $A$ is equal to $d$,  
where $d-1$ is
the maximal number of internal edges  of  boundary polygons of the dissection, if such boundary polygons exist or zero otherwise.
\end{thm}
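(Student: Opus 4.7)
The plan is to prove this by computing the Gorenstein dimension directly from the admissible presentation $A^{sg} = KQ^{sg}/I^{sg}$, which has the same Gorenstein dimension as $A$ since they are Morita equivalent. For any Gorenstein algebra the Gorenstein dimension equals $\max_{v \in Q_0^{sg}} \operatorname{id}_{A^{sg}}(P_v)$, so the task reduces to computing the injective dimension of each indecomposable projective. The overall strategy is to realise each minimal injective coresolution geometrically as a walk around a (degenerate) polygon of the orbifold dissection associated to $A$.

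The first step is to identify when $P_v$ is injective. Using the Nakayama permutation of $A^{sg}$ one shows that $P_v$ is injective precisely when the edge of $\gra$ corresponding to $v$ lies only on polygons of type~(b) of Proposition~\ref{Prop:degenerate-polygons}, i.e.\ polygons with no boundary segment. The point is that type~(b) polygons provide exactly the cyclic closure needed to make the Nakayama functor return to $P_v$ after one turn. In particular, if the dissection has no boundary polygons at all, then every indecomposable projective is injective, so $A^{sg}$ is self-injective and $d = 0$, giving the second clause of the theorem.

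For a vertex $v$ lying on a boundary polygon $P$, I would construct the minimal injective coresolution of $P_v$ explicitly, showing that each successive syzygy corresponds to moving one step along the internal edges of $P$ by a relation of $I^{sg}$ (a monomial relation at an ordinary corner or a commutativity relation at an orbifold corner, as described in Definition~\ref{defn:skew-gentle}). The coresolution terminates precisely when the walk reaches the unique boundary segment of $P$, so its length is determined by the position of $v$ within $P$ and by the total number of internal edges of $P$. Taking maxima, first over $v$ on a fixed $P$ and then over all boundary polygons, reduces the Gorenstein dimension computation to a purely combinatorial invariant of the orbifold dissection, from which the formula $d-1 = \max_{P}|\text{internal edges of }P|$ follows. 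A sanity check in the gentle case (reducing to the Geiss--Reiten formula from \cite{GR05}) will then fix the offset between $d$ and the number of internal edges.

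The main obstacle is the analysis at degenerate boundary polygons, which contain an orbifold point attached to a vertex $i \in Sp$ by a special edge. There the corresponding vertex of $Q^{sg}$ splits into $i^+, i^-$ and the relevant relations in $I^{sg}$ are non-monomial commutativity relations involving the sign coefficients $\lambda_j$ of Definition~\ref{defn:skew-gentle}. I would verify by a direct computation of syzygies that these commutativity relations produce coresolutions of the same length as in the non-degenerate case, with $P_{i^+}$ and $P_{i^-}$ evolving symmetrically around the degenerate polygon so that the sign coefficients cancel in the right way. Once this step is checked, the formula for $\operatorname{id}(P_v)$ is uniform over boundary polygons of both types, and the stated expression for $d$ follows immediately.
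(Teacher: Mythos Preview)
Your proposal takes a genuinely different route from the paper. The paper's proof is a short reduction to the gentle case: since $A$ is Morita equivalent to a skew-group algebra $B*\mathbb{Z}/2$ for a gentle algebra $B$ (by \cite{GdP99}), and since Gorenstein dimension is preserved under the skew-group construction (by \cite[Theorem 2.3]{AR91a}), one inherits the Geiss--Reiten formula from \cite{GR05} for the gentle algebra; the remaining step is only to observe that maximal saturated non-cyclic paths correspond to boundary polygons of the dissection. By contrast, you propose to work directly with the admissible presentation $A^{sg}$ and compute the injective coresolution of each $P_v$ by hand, interpreting each cosyzygy step as moving one edge along a boundary polygon.

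What each approach buys: the paper's argument is essentially citation-only and requires no new computation, but it treats the skew-gentle algebra as a black box and leans on two external structural results. Your approach is more self-contained and, if carried out, would yield an explicit geometric description of the minimal injective coresolutions as walks in the orbifold, which the paper does not provide. The cost is that the ``main obstacle'' you identify is real work: at a special vertex $i\in Sp$ the relations in $I^{sg}$ are non-monomial commutativity relations, and you must check that the cosyzygies of $P_{i^+}$ and $P_{i^-}$ evolve in parallel around the degenerate polygon without interacting in a way that alters the length of the coresolution. This is plausible and should go through, but it is exactly the computation that the paper's reduction to \cite{GR05} via \cite{AR91a} sidesteps.
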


\begin{proof}
 Let $\Lambda$ be the gentle algebra obtained from $A$ by deleting all special  special loops. By \cite{GdP99}, there exists a gentle algebra $B$ such that $A$ is Morita equivalent to the skew-group algebra $B*\mathbb{Z}/2$. Then by \cite[Theorem 2.3]{AR91a} and \cite{GR05}, $B$ is Gorenstein and the Gorenstein dimensions of $B$ and $A$ coincide.

By \cite{GR05}, the Gorenstein dimension of $\Lambda$ is equal to the maximal length of saturated paths in $A$ which are not cycles, if such paths exist or zero otherwise. The result follows from  the properties of  $\gra$.
\end{proof}

\subsection{q-Cartan matrices} A classical invariant for graded algebras is the so-called $q$-Cartan matrix which generalises the classical Cartan matrix of a graded finite dimensional algebra, see for example  \cite{Fuller-90}. For this recall that,  $A=KQ/I$ has a grading induced by paths length if $I$ is generated by homogeneous relations. The  $q$-Cartan matrix $C_A(q)=(c_{ij}(q))$ of $A$, for an indeterminate $q$, is the matrix with entries
$$c_{ij}(q)= \sum_n \operatorname{dim}_K (e_iAe_j)_nq^n \in \mathbb Z[q]$$
for vertices $i, j$ in $Q$ and where $(e_iAe_j)_n$ is the component of degree $n$ of $e_iAe_j$. The $q$-Cartan matrix is invariant under graded derived equivalence and specialises to the classical Cartan matrix by setting $q=1$.

In \cite[Theorem 4.2]{BH07}, the determinant of the $q$-Cartan matrix $C_A(q)$ of a (skew-)gentle algebra $A$ is computed in terms of saturated cycles. In the following we show that this description can be read-off the orbifold dissection of $A$.

\begin{thm}\label{thm:q-cartan}
Let $A$ be a skew-gentle algebra and $(S, M, \orbi, \gra)$ be the orbifold dissection induced by the generalised ribbon graph $\gra_A$ of $A$. Denote by $c_k$ the number of (degenerate) interior polygons of $\gra$ with $k$ edges and let $\Lambda$ be the gentle algebra obtained from $A$ by deleting the special loops. Then the $q$-Cartan matrix $C_A(q)$ has determinant
$$ \operatorname{det}C_A(q) = \operatorname{det}C_\Lambda(q)= \prod_{k\geq 1} (1-(-q)^k)^{c_k}.$$
\end{thm}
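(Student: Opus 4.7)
My plan is to derive both equalities from the Bessenrodt–Holm formula \cite{BH07}, which expresses the determinant of the $q$-Cartan matrix of a (skew-)gentle algebra as a product over saturated cycles, and then translate ``saturated cycle'' into ``interior polygon of the orbifold dissection''.

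First I would recall that, because $I^{sg}$ is quadratic, the admissible presentation $A^{sg}=KQ^{sg}/I^{sg}$ carries a path length grading, so that $C_A(q)=C_{A^{sg}}(q)$ is well defined. By \cite[Theorem 4.2]{BH07}, for any (skew-)gentle algebra one has
\[
\det C_A(q) \;=\; \prod_{k\geq 1}(1-(-q)^k)^{n_k(A)},
\]
where $n_k(A)$ is the number of cyclic equivalence classes of saturated cycles of length $k$ in the defining quiver (and similarly for $\Lambda$). Therefore the theorem reduces to the purely combinatorial assertion
\[
n_k(A) \;=\; n_k(\Lambda) \;=\; c_k \quad\text{for all } k\geq 1.
\]

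Next I would establish the bijection between saturated cycles and interior polygons on the gentle side. By Definition~\ref{defn:ribbon-surface} the vertices of $\gra_\Lambda$ are the maximal (and certain trivial) paths of $\Lambda$, and the cyclic order at each vertex is induced by the listed arrows; hence every face of $\gra_\Lambda$ in $S_\Lambda$ that contains no boundary segment is bounded by edges corresponding exactly to the arrows of a saturated cycle (a cycle every consecutive pair of whose arrows lies in $I$), with the length of the cycle equal to the number of edges of the polygon. This gives a length-preserving bijection $n_k(\Lambda) = c_k(\gra_\Lambda)$.

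Then I would argue that local replacement does not alter interior polygons. By Lemma~\ref{lem:special-edges}, each special edge of $\gra_\Lambda$ sits on a digon with exactly one boundary segment, hence in a \emph{boundary} polygon of the dissection. The local replacement depicted in Figure~\ref{fig:local-replacement} is therefore supported entirely inside a boundary polygon (it collapses that boundary segment and inserts a special edge with an orbifold endpoint), leaving every interior polygon of $\gra_\Lambda$ untouched. Consequently the multiset of interior (degenerate) polygons of $\gra_A$ coincides with that of $\gra_\Lambda$, giving $c_k(\gra_A)=c_k(\gra_\Lambda)$.

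Finally I would check that $n_k(A)=n_k(\Lambda)$, which gives the first equality $\det C_A(q)=\det C_\Lambda(q)$. The quiver $Q^{sg}$ differs from $Q$ only at special vertices, which are split into two; the arrows of $Q^{sg}$ incident with these split vertices come in parallel copies, and the binomial generators of $I^{sg}$ force any would-be saturated cycle through a split vertex to identify with one through the corresponding vertex of $Q$. Combined with the fact that the special loops $\varepsilon$ themselves are deleted in passing to $Q^{sg}$ and contribute no saturated cycles, this yields a length-preserving bijection between saturated cycles in $Q^{sg}$ and in $Q$. I expect the main subtlety to be this last bookkeeping at split vertices: one must verify that the parallel-arrow structure of $Q^{sg}$ together with the commutativity relations in $I^{sg}$ really collapses these cycles to a single cyclic class, rather than producing spurious new ones, so that the $n_k$'s match exactly. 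Once this is done, combining the three bijections $n_k(A)=n_k(\Lambda)=c_k(\gra_\Lambda)=c_k(\gra_A)=c_k$ finishes the proof.
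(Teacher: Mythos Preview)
Your approach coincides with the paper's, which in fact gives no explicit proof: the statement is presented as a direct reading of \cite[Theorem~4.2]{BH07} through the geometric dictionary, and your steps 2--3 (saturated cycles $\leftrightarrow$ interior polygons of $\gra_\Lambda$, and invariance of interior polygons under local replacement) are exactly the translation the paper leaves implicit.

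One remark on your final step. The bookkeeping you propose at split vertices of $Q^{sg}$ is both unnecessary and ill-posed: the relations in $I^{sg}$ are not monomial, so ``saturated cycle'' has no obvious meaning there, and the argument you sketch about parallel arrows collapsing to a single cyclic class would need a different framework. The point is that \cite[Theorem~4.2]{BH07} already treats the skew-gentle case and expresses $\det C_A(q)$ directly in terms of the saturated cycles of the underlying \emph{gentle} pair $(Q,I)$; hence the first equality $\det C_A(q)=\det C_\Lambda(q)$ is part of their statement, not something you need to reprove. Once you cite BH07 for both equalities in terms of the $n_k(\Lambda)$, only the geometric identification $n_k(\Lambda)=c_k$ remains, and that is your steps 2--3.
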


\section{Example}
In this  section, we will illustrate the geometric model and some of the results in the previous sections on an example. 

Let $A=KQ/I$ be the skew-gentle algebra with admissible presentation $A^{sg}=KQ^{sg}/I^{sg}$, where $Q$ and $Q^{sg}$ are as follows
\[
\xymatrix{Q:& \, & 3\ar[dl]_{\alpha_1} \ar[dr]^{\alpha_4}& \, & \,  & \, &  Q^{sg}: &  & 3\ar[dl]_{(3,\alpha_1, 1)} \ar[dr]^{(3,\alpha_4, 2)}& \, & 4^+\\
 & 1 \ar[rr]_{\alpha_2}& \,& 2\ar[rr]_{\alpha_3} & \,  & 4 \ar@(ul,ur)[]^{\varepsilon} && 1 \ar[rr]_{(1,\alpha_2, 2)}& \,& 2\ar[ru]_{(2,\alpha_3, 4^+)} \ar[rd]^{(2,\alpha_3, 4^-)} & \,  & \,\\
& & & & & & & & & & 4^-}
\]
and where $I=\langle \alpha_1\alpha_2, \alpha_4\alpha_3, \varepsilon^2-\varepsilon\rangle$ and $I^{sg}=\langle (3,\alpha_1, 1)(1,\alpha_2,2), (3,\alpha_4, 2)(2,\alpha_3 4^+), (3,\alpha_4, 2)(2,\alpha_3 4^-)\rangle$.

%
%
Following Remark \ref{rem:GeometricRibbonGraph}, the set of vertices $M_A$ of the ribbon graph is the set $\{\alpha_2\alpha_3\epsilon, \alpha_1,\alpha_4, e_4\}$ and its generalised ribbon graph $G_A$ can be seen in Figure~\ref{Fig:final_example_ribbon}. Note that the only difference between a ribbon graph and a generalised ribbon graph is that in a generalised ribbon graph some of the (leaf) vertices, namely those giving rise to orbifold points, are designated to be special. 

\begin{figure}[ht!]
    \centering
\begin{tikzpicture}
\node at (0,0) {$\alpha_2\alpha_3\varepsilon$};
\filldraw (0,-0.2) circle (1pt);
\filldraw (270:1.3) circle (1pt);
\filldraw (270:2.8cm) circle (1pt);
\node at (322:1.3cm) {\tiny$\times$};
\node at (270:1.5cm) {$\alpha_4$};
\node at (270:3.1cm) {$\alpha_1$};
\node at (322:1.6cm) {$e_4$};
\node at (235:1.6cm) {$1$};
\node at (279:0.9cm) {$2$};
\node at (288:2cm) {$3$};
\node at (339:0.9cm) {$4$};
\draw (0,-0.2) to (270:1.3);
\draw (0,-0.2) to[out=180, in=180] (270:2.8cm);
\draw (270:1.3) to[out=0, in=0] (270:2.8cm);
\draw (0,-0.2) to (322:1.3cm);
\end{tikzpicture}
  \caption{Generalised ribbon graphs of algebra $A$}
    \label{Fig:final_example_ribbon}
\end{figure}
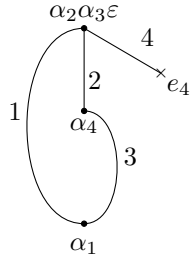

Since $A$ is a skew-gentle algebra, the generalised ribbon graph $G_A$ is embedded in an orbifold with one orbifold point of order 2 which corresponds to the special vertex $e_4$. The corresponding orbifold dissection of $A$ and its dual graph are depicted in Figure\ref{Fig:dissection of G_A}.

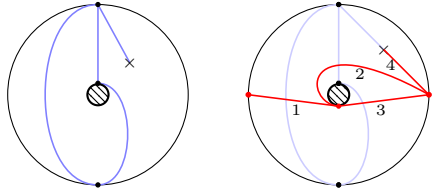
\begin{figure}[ht!]
    \centering
    \begin{tikzpicture}[scale=0.8]
\draw (0,0) circle (1.5cm);
\draw[semithick,blue!50!white] (90:1.5cm) to[out=180,in=180] (270:1.5cm);
\draw[semithick,blue!50!white] (90:0.19cm) to[out=0, in=0] (270:1.5cm);
\draw[semithick, blue!50!white] (90:1.5cm) to (0,0.19);
\draw[semithick, blue!50!white] (90:1.5cm) to (45:0.75cm);
\draw[pattern=north west lines, thick] (0,0) circle(5pt);
\filldraw (90:1.5cm) circle (1pt);
\filldraw (270:1.5cm) circle (1pt);
\filldraw (90:0.19cm) circle (1pt);
\node at (45:0.75cm) {\tiny$\times$};
\draw (4,0) circle (1.5cm);
\filldraw (90:1.5cm) ++(4,0)circle (1pt) coordinate (a);
\filldraw (270:1.5cm) ++(4,0)circle (1pt) coordinate (b);
\filldraw (90:0.19cm) ++(4,0)circle (1pt) coordinate (f);
\draw[semithick,blue!20!white] (a) to[out=180,in=180] (b);
\draw[semithick,blue!20!white] (f) to[out=0, in=0] (b);
\draw[semithick, blue!20!white] (a) to (4,0.19cm);
\draw[semithick, blue!20!white] (a) to (4.75, 0.75);
\draw[pattern=north west lines, thick] (4,0) circle(5pt);
\filldraw (a) circle (1pt);
\filldraw (b) circle (1pt);
\filldraw (f) circle (1pt);
\node at (4.75,0.75) {\tiny$\times$};
\filldraw[semithick, red] (180:1.5) ++(4,0) circle (1pt) coordinate (c);
\filldraw[semithick, red] (0:1.5) ++(4,0) circle (1pt) coordinate (d);
\draw[semithick, red] (4,-0.19cm) to (c);
\draw[semithick, red] (4,-0.19cm) to (d);
\filldraw[semithick, red] (4,-0.19cm) circle (0.03);
\draw[semithick, red] (4,-0.19cm) ..controls (3.4, -0.15) and (3.35,1.2)..(5.5,0);
\draw[semithick, red] (5.5,0) to (4.75, 0.75);
\node at ($(4,0)+(200:0.75cm)$) {\tiny $1$};
\node at ($(4,0)+(45:0.5cm)$) {\tiny $2$};
\node at ($(4,0)+(30:1cm)$) {\tiny $4$};
\node at ($(4,0)+(340:0.75cm)$) {\tiny $3$};
\end{tikzpicture}
 \caption{Orbifold dissection induced by $G_A$ and its dual graph}
    \label{Fig:dissection of G_A}
\end{figure}

Let $(\gamma, f)$ be the graded curve where $\gamma$ is as in Figure\ref{Fig:curves} and where the grading is given by $f=(1,2,1,0)$. The homotopy string $\sigma(\gamma)$ associated to $\gamma$ is $(\alpha_2\alpha_3)(\overline{\alpha_3})(\overline{\alpha_4})$ and the grading $\mu(f)$ induced by $f$. This induces the following  asymmetric string complex $P^\bullet_{(\sigma(\gamma), \mu(f))}$ in $K^{b,-}(proj$-$A^{sg})$. 
\[
\begin{tikzcd}
\, & P_1 \arrow[r, "\alpha_2\alpha_3"] & P_{4^+}\bigoplus P_{4^-} \\
 P_3  \arrow[r, "\alpha_4"]   &  P_2\arrow[ru, "\alpha_3"swap]& \,
\end{tikzcd}
.\]

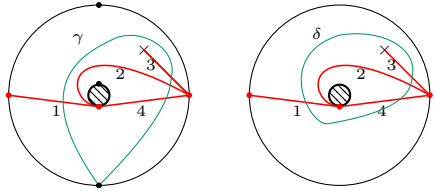
\begin{figure}[ht!]
    \centering
    \begin{tikzpicture}[scale=0.8]
\draw (0,0) circle (1.5cm);
\draw[pattern=north west lines, thick] (0,0) circle(5pt);
\filldraw (90:1.5cm) circle (1pt);
\filldraw (270:1.5cm) circle (1pt);
\filldraw (90:0.19cm) circle (1pt);
\node at (0.75,0.75) {\tiny$\times$};
\filldraw[semithick, red] (180:1.5) circle (1pt);
\filldraw[semithick, red] (0:1.5) circle (1pt);
\draw[semithick, red] (0,-0.19cm) to (180:1.5);
\draw[semithick, red] (0,-0.19cm) to (0:1.5);
\draw[semithick, red] (0.75, 0.75) to (1.5,0);
\filldraw[semithick, red] (0,-0.19cm) circle (1pt);
\draw[semithick, red] (0,-0.19cm) ..controls (-0.6, -0.15) and (-0.65,1.2)..(1.5,0);
\draw[semithick, red] (1.5,0) to (0.75, 0.75);
\draw[blue!40!green] (270:1.5cm) ..controls (170:1.25cm) and (110:0.75) .. (73:1cm);
\draw[blue!40!green] (73:1cm) ..controls (56:1.35cm) and (30:1.5cm) .. (15:1.25cm);
\draw[blue!40!green] (15:1.25cm) ..controls (10:1.20cm) and (350:1.3cm) .. (270:1.5cm);
\node at (200:0.75cm) {\tiny $1$};
\node at (45:0.5cm) {\tiny $2$};
\node at (30:1cm) {\tiny $3$};
\node at (340:0.75cm) {\tiny $4$};
\node at (110:1cm){\tiny$\gamma$};
\draw (4,0) circle (1.5cm);
\filldraw (90:1.5cm) circle (1pt) coordinate (a);
\filldraw (270:1.5cm) circle (1pt) coordinate (b);
\filldraw (90:0.19cm) circle (1pt) coordinate (f);
\filldraw (a) circle (1pt);
\filldraw (b) circle (1pt);
\filldraw (f) circle (1pt);
\node at (4.75,0.75) {\tiny$\times$};
\draw[pattern=north west lines, thick] (4,0) circle(5pt);
\filldraw[semithick, red] (180:1.5) ++(4,0) circle (1pt) coordinate (c);
\filldraw[semithick, red] (0:1.5) ++(4,0) circle (1pt) coordinate (d);
\draw[semithick, red] (4,-0.19cm) to (c);
\draw[semithick, red] (4,-0.19cm) to (d);
\filldraw[semithick, red] (4,-0.19cm) circle (1pt);
\draw[semithick, red] (4,-0.19cm) ..controls (3.4, -0.15) and (3.35,1.2)..(5.5,0);
\draw[semithick, red] (5.5,0) to (4.75, 0.75);
\node at ($(4,0)+(200:0.75cm)$) {\tiny $1$};
\node at ($(4,0)+(45:0.5cm)$) {\tiny $2$};
\node at ($(4,0)+(30:1cm)$) {\tiny $3$};
\node at ($(4,0)+(340:0.75cm)$) {\tiny $4$};
\draw[blue!40!green] ($(4,0)+(250:0.5cm)$) ..controls ($(4,0)+(230:0.75cm)$) and ($(4,0)+(150:1.3cm)$) .. ($(4,0)+(90:1cm)$);
\draw[blue!40!green] ($(4,0)+(90:1cm)$) ..controls ($(4,0)+(56:1.35cm)$) and ($(4,0)+(30:1.5cm)$) .. ($(4,0)+(15:1.25cm)$);
\draw[blue!40!green] ($(4,0)+(15:1.25cm)$) ..controls ($(4,0)+(10:1.20cm)$) and ($(4,0)+(350:1.3cm)$) .. ($(4,0)+(250:0.5cm)$);
\node at ($(4,0)+(110:1.1cm)$) {\tiny$\delta$};
\end{tikzpicture}
 \caption{The curves $\gamma$ and $\delta$ in the orbifold dissection $(S,M, \orbi, \gra_A)$}
    \label{Fig:curves}
\end{figure}

Let $(\delta, g)$ be the graded closed curve where $\delta$ is depicted in Figure~\ref{Fig:curves} and $g=(0,1,0,-1,0)$. The asymmetric homotopy band $\sigma(\delta)$ associated to $\delta$ is $(\alpha_2\alpha_3)(\overline{\alpha_3})(\overline{\alpha_4})(\alpha_1)$ and as before $\mu(g)$ is induced by $g$. Let $q(x)\in \operatorname{ind} K[x]$ be a non trivial power of an irreducible polynomial over $K$ with leading coefficient equal to 1 and different from $x$ and $x-1$. The asymmetric band complex $P^\bullet_{(\sigma(\delta), \mu(g), p(x))}$ in $K^{b,-}(proj$-$A^{sg})$ induced by $(\sigma(\delta), \mu(g))$ is 
\[
\begin{tikzcd}
\, & P(1)\otimes K^{\operatorname{deg} q(x)} \arrow[rd, "\alpha_2\alpha_3"]& \\
 P(3) \otimes K^{\operatorname{deg} q(x)} \arrow[rd, "\alpha_4"swap] \arrow[ru, "\alpha_1"]&   & P(4)\otimes K^{\operatorname{deg} q(x)} \\
   &  P(2) \otimes K^{\operatorname{deg} q(x)} \arrow[ru, "\alpha_3"swap]& \,
\end{tikzcd}
\]
where $P(i)=P_i$ for $i\neq 4$ and $P(4)=P_{4^+}\oplus P_{4^-}$.

Observe that the set of interior polygons of $G$ is empty.  Thus by Theorem\ref{thm:sigularity category}, the singularity category $\mathcal D_{sg}(A)$ is equivalent to the category with one element, and by Theorem\ref{thm:q-cartan}, the determinant $\operatorname{det}C_A(q)$ of the $q$-Cartan matrix $C_A(q)$ is zero.

By \ref{thm:gorenstein dimension}, to compute the Gorenstein dimension of $A$, we need to count the maximal number $d$ of internal edges of boundary polygons of the dissection, in this case, the dissection has one digon and two 4-gon, then $d=3$, and as a consequence, the Gorenstein dimension of $A$ is $3-1=2$.

%
%
%
%
%
%
%


\bibliographystyle{acm}
\bibliography{biblio}

\end{document}